\newtheorem{theorem}{Theorem}
\theoremstyle{plain}
\newtheorem{corollary}{Corollary}
\newtheorem{definition}{Definition}
\newtheorem{lemma}{Lemma}
\newtheorem{proposition}{Proposition}
\numberwithin{equation}{section}
\DeclareMathOperator{\sech}{sech}
\begin{document}
\title[$f$-biharmonic $\theta _{\alpha }-$slant curves]{Unveiling $f-$%
Biharmonic $\theta _{\alpha }-$Slant Curves \\
in $\mathcal{S}-$ Space Forms}
\author{\c{S}aban G\"{u}ven\c{c}}
\address[\c{S}. G\"{u}ven\c{c}]{Balikesir University, Department of
Mathematics \\
10145, \c{C}a\u{g}\i \c{s}, Balikesir, TURKEY}
\email[\c{S}. G\"{u}ven\c{c}]{sguvenc@balikesir.edu.tr}
\subjclass[2020]{Primary 53C25; Secondary 53C40, 53A04}
\keywords{$\theta _{\alpha }-$slant curve, $f$-biharmonic curve, Frenet
curve, $\mathcal{S}$-space form. \\
}

\begin{abstract}
In this paper, we firstly provide a concise overview of $\mathcal{S}-$%
manifolds, $f$-biharmonicity and $\theta _{\alpha }$-slant curves. We then
derive a key equation and analyze it in detail to establish the necessary
and sufficient conditions for $\theta _{\alpha }$-slant curves to be $f$%
-biharmonic. Finally, we present an example to support our findings. \medskip
\end{abstract}

\maketitle

\section{Introduction}

In 1964, J. Eells and L. Maire explored key aspects of harmonic maps, later
extending their ideas to $k$-harmonic maps \cite{EL}. Building on this, G.
Y. Jiang studied the case for $k=2$, deriving foundational variational
formulas for $2$-harmonic maps \cite{Jiang-86}. B. Y. Chen, in a
comprehensive survey, addressed biharmonic submanifolds of Euclidean space,
particularly focusing on the condition $\Delta H=0$, with $\Delta $ being
the Laplace operator and $H$ representing the mean curvature vector field 
\cite{Chen-96}. Notably, if the ambient space is Euclidean, the works of
Jiang and Chen converge.

Expanding this field, J. T. Cho, J. Inoguchi, and J. E. Lee introduced the
concept of slant curves in Sasakian manifolds, drawing a parallel to
Lancret's theorem for Euclidean space curves \cite{CIL}. Their findings
highlighted that non-geodesic curves in Sasakian $3$-manifolds are slant
curves when the ratio of $(\tau \pm 1)$ to the geodesic curvature $k$
remains constant.

Further advancements were made by D. Fetcu and C. Oniciuc, who proposed a
method to generate biharmonic submanifolds in Sasakian space forms by
utilizing the characteristic vector field flow $\xi $ \cite{Fetcu-2009-2}.
They demonstrated that this flow transforms biharmonic integral submanifolds
into biharmonic anti-invariant submanifolds. Building on this work, the
author and C. \"{O}zg\"{u}r examined biharmonic slant curves in $\mathcal{S}%
- $space forms, aiming to generalize these findings \cite{CKMS-2018}.

This paper seeks to extend the results for biharmonic slant curves by
exploring $f-$biharmonic $\theta _{\alpha }-$slant curves in $\mathcal{S}-$%
space forms. The subsequent sections provide definitions, examine the
necessary conditions for these curves to be proper $f-$biharmonic and
conclude with an example that demonstrate these concepts.

\section{Background and Definitions\label{prem}}

Let $(M,g)$ be a $(2m+s)$-dimensional Riemannian manifold. $M$ is called a 
\textit{framed metric manifold} with a \textit{framed metric structure} $%
(\phi ,\xi_{\alpha},\eta_{\alpha},g)$, $\alpha \in \{1,...,s\}$, if it
satisfies: 
\begin{gather}
\begin{array}{cccc}
\phi^{2}X=-X+\sum_{ \alpha=1}^{s}\eta_{\alpha}(X)\xi_{\alpha}, & 
\eta_{\alpha}(\phi(X))=0, & \eta_{\alpha}(\xi_{\beta})=\delta_{\alpha\beta},
& \phi(\xi_{\alpha})=0,%
\end{array}
\notag \\
g(X,Y)=g(\phi X,\phi Y)+\sum_{ \alpha=1}^{s}\eta_{\alpha}(X)\eta_{\alpha}(Y),
\label{2.2} \\
\begin{array}{cc}
\eta_{\alpha}(X)=g(X,\xi), & d\eta_{\alpha}(X,Y)=-d\eta_{\alpha}(Y,X)=g(X,%
\phi Y),%
\end{array}
\notag
\end{gather}
where $\phi$ is a $(1,1)$-type tensor field of rank $2m$; $\xi_{1},...,
\xi_{s}$ are vector fields; $\eta_{1},..., \eta_{s}$ are $1$-forms, and $g$
is a Riemannian metric on $M$; $X,Y\in TM$ and $\alpha,\beta \in \{1,...,s\}$
(see \cite{Nak-1966}, \cite{YK-1984}). The structure $(\phi, \xi_{\alpha},
\eta_{\alpha}, g)$ is said to be an $\mathcal{S}$\textit{-structure} if the
Nijenhuis tensor of $\phi$ is equal to $-2d\eta_{\alpha}\otimes \xi_{\alpha}$
for all $\alpha \in \{1,...,s\}$ \cite{Blair-1970}.

If $s=1$, a framed metric structure is equivalent to an almost contact
metric structure, and an $\mathcal{S}$-structure is equivalent to a Sasakian
structure. For an $\mathcal{S}$-structure, we have the following equations 
\cite{Blair-1970}: 
\begin{equation}
(\nabla_{X}\phi)Y=\sum_{\alpha=1}^{s}\left\{ g(\phi X,\phi
Y)\xi_{\alpha}+\eta_{\alpha}(Y)\phi^{2}X\right\},  \label{nablaf}
\end{equation}
and 
\begin{equation}
\nabla \xi_{\alpha}=-\phi,  \label{nablaxi}
\end{equation}
for all $\alpha=1,...,s$. In the case of $s=1$, (\ref{nablaxi}) can be
derived from (\ref{nablaf}).

Let $X\in T_{p}M$ be orthogonal to $\xi_{1},..., \xi_{s}$. The plane section
spanned by $\{X,\phi X\}$ is called a $\phi$\textit{-section} in $T_{p}M$,
and its sectional curvature is referred to as the $\phi$\textit{-sectional
curvature}. Let $(M, \phi, \xi_{\alpha}, \eta_{\alpha}, g)$ be an $\mathcal{S%
}$-manifold. If $M$ has constant $\phi$-sectional curvature, its curvature
tensor $R$ is given by 
\begin{equation}
\begin{array}{c}
R(X,Y)Z=\sum_{\alpha, \beta}\left\{
\eta_{\alpha}(X)\eta^{\beta}(Z)\phi^{2}Y-\eta_{\alpha}(Y)\eta^{\beta}(Z)%
\phi^{2}X\right. \\ 
\left. -g(\phi X,\phi Z)\eta_{\alpha}(Y)\xi_{\beta}+g(\phi Y,\phi
Z)\eta_{\alpha}(X)\xi_{\beta}\right\} \\ 
+\frac{c+3s}{4}\left\{ -g(\phi Y,\phi Z)\phi^{2}X+g(\phi X,\phi
Z)\phi^{2}Y\right\} \\ 
+\frac{c-s}{4}\left\{ g(X,\phi Z)\phi Y-g(Y,\phi Z)\phi X+2g(X,\phi Y)\phi
Z\right\},%
\end{array}
\label{curvaturetensor}
\end{equation}
for $X,Y,Z\in TM$ \cite{CFF}. In this case, $M$ is called an $\mathcal{S}$%
\textit{-space form} and is denoted by $M(c)$. If $s=1$, an $\mathcal{S}$%
-space form is equivalent to a Sasakian space form \cite{CB-1994}.

Let $(M,g)$ and $(N,h)$ be Riemannian manifolds, and $\phi :M\rightarrow N$
a differentiable map. A \textit{harmonic map} is a critical point of the
energy functional of $\phi$, defined as 
\begin{equation*}
E(\phi)=\frac{1}{2}\int_{M}\left\vert d\phi \right\vert^{2}\upsilon_{g},
\end{equation*}
(see \cite{ES}). Furthermore, a \textit{biharmonic map} is a critical point
of the bienergy functional 
\begin{equation*}
E_{2}(\phi)=\frac{1}{2}\int_{M}\left\vert \tau (\phi)
\right\vert^{2}\upsilon_{g},
\end{equation*}
where $\tau (\phi)=\text{trace}\nabla d\phi$ and is called \textit{the first
tension field} of $\phi$. Jiang derived the biharmonic map equation \cite%
{Jiang-86}: 
\begin{equation*}
\tau_{2}(\phi)=-J^{\phi}(\tau(\phi))=-\Delta\tau(\phi)-\text{trace}%
R^{N}(d\phi,\tau(\phi))d\phi=0,
\end{equation*}
where $J^{\phi}$ denotes the Jacobi operator of $\phi$. It is evident that
harmonic maps are biharmonic; thus, non-harmonic biharmonic maps are
referred to as \textit{proper biharmonic}.

In \cite{YeLin-2014}, Y.L. Ou proved the following lemma:

\begin{lemma}
\cite{YeLin-2014} A curve $\gamma :(a,b)\rightarrow (M,g)$ parametrized by
arclength is an $f$-biharmonic curve with a function $f:(a,b)\rightarrow
(0,\infty)$ if and only if 
\begin{equation*}
f(\nabla_{T}\nabla_{T}\nabla_{T}T-R(T,\nabla_{T}T)T)+2f^{\prime}\nabla_{T}%
\nabla_{T}T+f^{\prime\prime}\nabla_{T}T=0.
\end{equation*}
\end{lemma}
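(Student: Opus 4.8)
The plan is to derive the $f$-biharmonic curve equation directly from the general $f$-biharmonic map equation $\tau_2(\phi) + \dots$, specialized to the case where the domain is a one-dimensional manifold, i.e. a curve $\gamma$ parametrized by arclength. First I would recall that for an $f$-biharmonic map, the Euler--Lagrange equation reads $f\,\tau_2(\gamma) + 2\nabla_{\operatorname{grad} f}\tau(\gamma) + (\Delta f)\,\tau(\gamma) = 0$, where $\tau_2(\gamma) = -J^\gamma(\tau(\gamma)) = -\Delta\tau(\gamma) - \operatorname{trace} R(d\gamma, \tau(\gamma))d\gamma$. For a curve on a one-dimensional domain, the tension field is simply $\tau(\gamma) = \nabla_T T$, where $T = \gamma'$ is the unit tangent; this is the standard reduction since the second fundamental form of $\gamma$ in the arclength parametrization contributes only the $\nabla_T T$ term.

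Next I would translate each differential-geometric operator on the one-dimensional domain into covariant derivatives along $\gamma$. The rough Laplacian acting on a vector field $V$ along the curve becomes $\Delta V = -\nabla_T\nabla_T V$ (with the sign convention making $\Delta$ nonnegative, matching the one used implicitly in the statement); hence $-\Delta\tau(\gamma) = \nabla_T\nabla_T\nabla_T T$. The curvature term reduces to $-\operatorname{trace} R(d\gamma,\tau(\gamma))d\gamma = -R(T,\nabla_T T)T$, since the trace over the one-dimensional domain just evaluates on the unit vector $\partial_t$. For the function $f:(a,b)\to(0,\infty)$, the gradient with respect to the standard metric on the interval is $\operatorname{grad} f = f' T$, so $\nabla_{\operatorname{grad} f}\tau(\gamma) = f'\nabla_T\nabla_T T$, and the Laplacian of $f$ on the interval is $\Delta f = -f''$. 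Substituting all of this into $f\,\tau_2(\gamma) + 2\nabla_{\operatorname{grad} f}\tau(\gamma) + (\Delta f)\tau(\gamma) = 0$ yields
\begin{equation*}
f\bigl(\nabla_T\nabla_T\nabla_T T - R(T,\nabla_T T)T\bigr) + 2f'\nabla_T\nabla_T T - f''\nabla_T T = 0,
\end{equation*}
which, after noting a possible sign convention for $\Delta$ on functions versus the one in the cited statement, matches the claimed identity (if instead $\Delta f = f''$ is used on the interval, one gets the $+f''\nabla_T T$ displayed in the lemma; I would fix the convention to be consistent with the one that produces the stated formula).

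The main subtlety — rather than a deep obstacle — is bookkeeping the sign conventions consistently: the Laplacian $\Delta$ on vector fields along the curve, the Laplacian on functions on the interval, and the curvature operator $R$ all carry sign choices, and the displayed formula fixes one particular combination. I would therefore state the conventions explicitly at the start ($\Delta = -\nabla_T\nabla_T$ on sections of the pullback bundle, $R(X,Y) = \nabla_X\nabla_Y - \nabla_Y\nabla_X - \nabla_{[X,Y]}$) and then verify that the substitution reproduces exactly the coefficients $f$, $2f'$, $f''$ in the statement. The remaining steps are then purely mechanical: expand $\operatorname{trace}$ over the one-dimensional domain, use $\operatorname{grad} f = f'\partial_t$ and $\Delta f = f''$ (with the chosen sign), and collect terms. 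No analytic estimates or global arguments are needed, since this is a pointwise identity equivalent to the vanishing of the Euler--Lagrange expression for the $f$-bienergy restricted to curves.
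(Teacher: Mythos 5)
The paper gives no proof of this lemma---it is quoted verbatim from Ou \cite{YeLin-2014}---and your proposal is exactly the standard derivation used there: specialize the $f$-biharmonic map equation $f\,\tau_2(\gamma)+2\nabla^{\gamma}_{\operatorname{grad}f}\tau(\gamma)+(\Delta f)\tau(\gamma)=0$ to a one-dimensional domain with $\tau(\gamma)=\nabla_T T$. The only loose end is the sign you hedge on: the convention is forced, not free---since the tension field is $\tau(\phi)=\operatorname{trace}\nabla d\phi$, the Laplacian on functions appearing in the Euler--Lagrange equation is $\Delta f=\operatorname{trace}\nabla df=f''$ on the interval, which yields the $+f''\nabla_T T$ term directly, while the rough Laplacian in $\tau_2$ carries the opposite (positive-spectrum) convention $-\nabla_T\nabla_T$, giving $+\nabla_T\nabla_T\nabla_T T$. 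With that fixed, your argument is correct and complete modulo the general first-variation formula for the $f$-bienergy, which is the legitimate black box here just as the whole lemma is for the paper.
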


As a result, $\gamma$ is a proper $f$-biharmonic curve if and only if 
\begin{equation*}
\tau_{3}=\nabla_{T}\nabla_{T}\nabla_{T}T-R(T,\nabla_{T}T)T+2\frac{f^{\prime}%
}{f}\nabla_{T}\nabla_{T}T+\frac{f^{\prime\prime}}{f}\nabla_{T}T=0,
\end{equation*}
where $f$ is a non-constant function.

Let $\gamma :I\rightarrow M$ be a unit-speed curve in an $m$-dimensional
Riemannian manifold $(M,g)$. The curve $\gamma$ is called a \textit{Frenet
curve of osculating order} $r$ $(1\leq r\leq m)$, if there exist $g$%
-orthonormal vector fields $V_{1},V_{2},...,V_{r}$ along the curve
satisfying the Frenet equations: 
\begin{eqnarray}
T &=& V_{1} = \gamma^{\prime},  \notag \\
\nabla_{T}V_{1} &=& k_{1}V_{2},  \notag \\
\nabla_{T}V_{j} &=& -k_{j-1}V_{j-1}+k_{j}V_{j+1}, \quad 1<j<r,
\label{Frenetequations} \\
\nabla_{T}V_{r} &=& -k_{r-1}V_{r-1}.  \notag
\end{eqnarray}
Here, $k_{1},...,k_{r-1}$ are positive functions known as the curvatures of $%
\gamma$. If $k_{1}=0$, then $\gamma$ is referred to as a \textit{geodesic}.
If $k_{1}$ is a non-zero positive constant and $r=2$, $\gamma$ is called a 
\textit{circle}. If $k_{1},...,k_{r-1}$ are all non-zero positive constants,
then $\gamma$ is called a \textit{helix of order} $r$ $(r\geq 3)$. When $r=3$%
, it is simply referred to as a \textit{helix}.\bigskip

A submanifold of an $\mathcal{S}$-manifold is said to be an \textit{integral
submanifold} if $\eta_{\alpha}(X)=0$, $\alpha \in \{1,...,s\}$, where $X$ is
tangent to the submanifold \cite{KDT}. A \textit{Legendre curve} is a $1$%
-dimensional integral submanifold of an $\mathcal{S}$-manifold $(M^{2m+s},
\phi, \xi_{\alpha}, \eta_{\alpha}, g)$. More precisely, a unit-speed curve $%
\gamma :I\rightarrow M$ is a Legendre curve if $T$ is $g$-orthogonal to all $%
\xi_{\alpha}$ $(\alpha = 1,...,s)$ \cite{OG-2014}.

\section{$\protect\theta_{\protect\alpha}-$Slant Curves in $\mathcal{S}-$%
Manifolds \label{tetaalfa}}

In this section, we review the concept of $\theta_{\alpha}-$slant curves in $%
\mathcal{S}$-manifolds:

\begin{definition}
\label{tetaalfaslantcurve}\cite{Guvenc-2020} Let $M=(M^{2m+s},\phi,
\xi_{\alpha}, \eta_{\alpha}, g)$ be an $\mathcal{S}$-manifold, and let $%
\gamma: I \rightarrow M$ be a unit-speed curve. $\gamma$ is called a $%
\theta_{\alpha}-$\textit{slant curve} if there are constant angles $%
\theta_{\alpha}$ $(\alpha = 1, \dots, s)$ such that $\eta_{\alpha}(T) = \cos
\theta_{\alpha}$. These angles $\theta_{\alpha}$ are referred to as the 
\textit{contact angles} of $\gamma$.
\end{definition}

It is easy to observe that Definition \ref{tetaalfaslantcurve} generalizes
the family of slant curves to $\theta_{\alpha}-$slant curves. In particular,
a $\theta_{\alpha}-$slant curve is called \textit{slant} if all its contact
angles are equal (see \cite{CKMS-2018}).

For a $\theta_{\alpha}-$slant curve, it is known that $\eta_{\alpha}(V_2) =
0 $ for all $\alpha = 1, \dots, s.$ The following notations are used
similarly to \cite{Guvenc-2020}: 
\begin{equation*}
a = \sum_{\alpha=1}^{s} \cos^2 \theta_{\alpha}, \quad b =
\sum_{\alpha=1}^{s} \cos \theta_{\alpha}, \quad \mathcal{V} =
\sum_{\alpha=1}^{s} \cos \theta_{\alpha} \xi_{\alpha}.
\end{equation*}

\begin{corollary}
\cite{Guvenc-2020} If $\gamma$ is a slant curve, then 
\begin{equation*}
a = s \cos^2 \theta, \quad b = s \cos \theta, \quad \mathcal{V} = \cos
\theta \sum_{\alpha=1}^{s} \xi_{\alpha},
\end{equation*}
where $\theta$ denotes the common contact angle of $\gamma$.
\end{corollary}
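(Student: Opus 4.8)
The plan is to unwind the definitions directly. By the remark following Definition~\ref{tetaalfaslantcurve}, a slant curve is precisely a $\theta_{\alpha}$-slant curve all of whose contact angles coincide; write $\theta$ for this common value, so that $\theta_{\alpha}=\theta$ for every $\alpha\in\{1,\dots,s\}$. The only input needed beyond this is the explicit form of the three quantities $a$, $b$, $\mathcal{V}$ introduced just above the corollary.

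First I would substitute $\theta_{\alpha}=\theta$ into $a=\sum_{\alpha=1}^{s}\cos^{2}\theta_{\alpha}$; since every summand equals the fixed number $\cos^{2}\theta$ and there are $s$ of them, this gives $a=s\cos^{2}\theta$. The same reasoning applied to $b=\sum_{\alpha=1}^{s}\cos\theta_{\alpha}$ yields $b=s\cos\theta$. Finally, for $\mathcal{V}=\sum_{\alpha=1}^{s}\cos\theta_{\alpha}\,\xi_{\alpha}$, the scalar factor $\cos\theta_{\alpha}=\cos\theta$ is independent of $\alpha$ and may be pulled out of the sum, leaving $\mathcal{V}=\cos\theta\sum_{\alpha=1}^{s}\xi_{\alpha}$.

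There is no genuine obstacle here: the statement is a formal specialization of the general $\theta_{\alpha}$-slant notation to the case of equal contact angles, and the argument is a one-line substitution in each of the three identities. The only point requiring a modicum of care is invoking the correct characterization of \textbf{slant} (equal contact angles) rather than confusing it with the Legendre condition $\theta_{\alpha}=\pi/2$; once that is pinned down, the three equalities follow immediately and simultaneously.
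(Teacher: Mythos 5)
Your proof is correct: the paper gives no proof of this corollary (it is cited from an earlier work), and the statement is indeed the immediate specialization of the definitions of $a$, $b$ and $\mathcal{V}$ to the case $\theta_{\alpha}=\theta$ for all $\alpha$, exactly as you argue. Your substitution argument is the intended (and essentially only) proof.
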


Let $\gamma$ be a non-geodesic unit-speed $\theta_{\alpha}-$slant curve.
Then, $g(\phi T, \phi T) = 1 - a \geq 0.$ If $a = 1$, we have $\phi T = 0$,
which implies $T = \mathcal{V}$. Hence, $\nabla_T T = \nabla_{\mathcal{V}} 
\mathcal{V} = 0$, meaning that $\gamma$ is a geodesic as an integral curve
of $\mathcal{V}$.

\begin{proposition}
\cite{Guvenc-2020} For a non-geodesic unit-speed $\theta_{\alpha}-$slant
curve in an $\mathcal{S}$-manifold, 
\begin{equation*}
a = \sum_{\alpha=1}^{s} \cos^2 \theta_{\alpha} < 1.
\end{equation*}
\end{proposition}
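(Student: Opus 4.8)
The plan is to prove the statement by contradiction, building directly on the identity noted just before the proposition: by the metric-compatibility relation (\ref{2.2}),
\[
g(\phi T,\phi T)=g(T,T)-\sum_{\alpha=1}^{s}\eta_{\alpha}(T)^{2}=1-\sum_{\alpha=1}^{s}\cos^{2}\theta_{\alpha}=1-a .
\]
Since the left-hand side is nonnegative, this already gives $a\le 1$ for free, so the only thing left to establish is that the equality $a=1$ cannot occur for a non-geodesic $\theta_{\alpha}-$slant curve.

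Assuming $a=1$, the displayed identity forces $|\phi T|^{2}=0$, hence $\phi T=0$ along $\gamma$. I would then apply $\phi$ a second time and invoke the structure identity $\phi^{2}X=-X+\sum_{\alpha}\eta_{\alpha}(X)\xi_{\alpha}$ together with $\phi\xi_{\alpha}=0$, obtaining
\[
0=\phi^{2}T=-T+\sum_{\alpha=1}^{s}\eta_{\alpha}(T)\xi_{\alpha}=-T+\mathcal{V},
\]
so that $T=\mathcal{V}$ along the curve.

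Next I would differentiate this relation. Since $\mathcal{V}=\sum_{\alpha}\cos\theta_{\alpha}\xi_{\alpha}$ is a genuine vector field on $M$ with constant coefficients and $\nabla\xi_{\alpha}=-\phi$ by (\ref{nablaxi}), it follows that along $\gamma$
\[
\nabla_{T}T=\nabla_{\mathcal{V}}\mathcal{V}=\sum_{\alpha=1}^{s}\cos\theta_{\alpha}\,\nabla_{\mathcal{V}}\xi_{\alpha}=-\Big(\sum_{\alpha=1}^{s}\cos\theta_{\alpha}\Big)\phi\mathcal{V}=0,
\]
where the final step uses $\phi\mathcal{V}=\sum_{\beta}\cos\theta_{\beta}\,\phi\xi_{\beta}=0$. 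Thus $k_{1}=|\nabla_{T}T|=0$, i.e.\ $\gamma$ is a geodesic, contradicting the hypothesis, and therefore $a<1$.

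I do not expect a genuine obstacle here: the argument is essentially the chain of implications already sketched in the paragraph preceding the statement, reorganized as a clean contradiction argument. The only places that warrant a sentence of justification are the passage from $\phi T=0$ to $T=\mathcal{V}$ (which is just the $\phi^{2}$ identity, applied pointwise along $\gamma$) and the remark that $\mathcal{V}$ extends to a vector field on $M$ with constant coefficients, so that $\nabla_{T}T=\nabla_{\mathcal{V}}\mathcal{V}$ and (\ref{nablaxi}) may be applied verbatim.
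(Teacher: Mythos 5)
Your proof is correct and is essentially the same argument the paper itself sketches in the paragraph immediately preceding the proposition: $g(\phi T,\phi T)=1-a\geq 0$, and $a=1$ would force $\phi T=0$, hence $T=\mathcal{V}$ and $\nabla_{T}T=\nabla_{\mathcal{V}}\mathcal{V}=0$, contradicting the non-geodesic hypothesis. The only cosmetic difference is that you spell out the passage $\phi T=0\Rightarrow T=\mathcal{V}$ via the $\phi^{2}$ identity and justify $\nabla_{\mathcal{V}}\mathcal{V}=0$ using $\nabla\xi_{\alpha}=-\phi$ and $\phi\mathcal{V}=0$, which the paper leaves implicit.
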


\begin{proposition}
\cite{Guvenc-2020} For a non-geodesic unit-speed $\theta_{\alpha}-$slant
curve in an $\mathcal{S}$-manifold $(M, \phi, \xi_{\alpha}, \eta_{\alpha},
g) $, we have 
\begin{equation}
\nabla_T \phi T = (1 - a) \sum_{\alpha=1}^{s} \xi_{\alpha} + b(-T + \mathcal{%
V}) + k_1 \phi V_2.  \label{nablafT}
\end{equation}
\end{proposition}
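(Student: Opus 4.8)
The plan is to differentiate $\phi T$ along $\gamma$ by means of the Leibniz rule for the structure tensor,
\[
\nabla_{T}(\phi T)=(\nabla_{T}\phi)T+\phi(\nabla_{T}T),
\]
and then to evaluate the two summands separately using the $\mathcal{S}$-structure identities recalled in Section \ref{prem}. All the work is in the first summand; the second one is immediate.

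For the first summand I would put $X=Y=T$ in the structure equation \eqref{nablaf}, which gives
\[
(\nabla_{T}\phi)T=\sum_{\alpha=1}^{s}\left\{ g(\phi T,\phi T)\,\xi_{\alpha}+\eta_{\alpha}(T)\,\phi^{2}T\right\}.
\]
Next I would reduce the two quantities that occur here. Since $\gamma$ is unit-speed, the compatibility relation \eqref{2.2} yields $g(\phi T,\phi T)=g(T,T)-\sum_{\beta}\eta_{\beta}(T)^{2}=1-\sum_{\beta}\cos^{2}\theta_{\beta}=1-a$, using the slant hypothesis $\eta_{\beta}(T)=\cos\theta_{\beta}$ and the definition of $a$. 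Likewise, the framed-metric axiom $\phi^{2}X=-X+\sum_{\beta}\eta_{\beta}(X)\xi_{\beta}$ applied to $X=T$ gives $\phi^{2}T=-T+\sum_{\beta}\cos\theta_{\beta}\,\xi_{\beta}=-T+\mathcal{V}$. Substituting these, and replacing $\eta_{\alpha}(T)$ by $\cos\theta_{\alpha}$ once more, the $\alpha$-sum collapses to
\[
(\nabla_{T}\phi)T=(1-a)\sum_{\alpha=1}^{s}\xi_{\alpha}+\Big(\sum_{\alpha=1}^{s}\cos\theta_{\alpha}\Big)(-T+\mathcal{V})=(1-a)\sum_{\alpha=1}^{s}\xi_{\alpha}+b(-T+\mathcal{V}),
\]
by the definition of $b$. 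For the second summand, the Frenet equations \eqref{Frenetequations} give $\nabla_{T}T=\nabla_{T}V_{1}=k_{1}V_{2}$, hence $\phi(\nabla_{T}T)=k_{1}\phi V_{2}$. Adding the two contributions produces exactly \eqref{nablafT}.

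I do not expect any real obstacle: the proof is a direct substitution, and the only points needing a little care are the two reductions $g(\phi T,\phi T)=1-a$ and $\phi^{2}T=-T+\mathcal{V}$, which are consequences of the defining relations together with the slant condition and are precisely where the constancy of the contact angles is used to recognize the symbols $a$, $b$, $\mathcal{V}$. The hypothesis that $\gamma$ is non-geodesic is invoked only so that $k_{1}$ and $V_{2}$ are meaningfully defined through the Frenet frame; the displayed formula is otherwise an algebraic identity holding pointwise along $\gamma$.
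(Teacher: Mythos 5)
Your proof is correct and is the standard direct computation: the paper itself only cites this proposition from \cite{Guvenc-2020} without reproving it, and your argument via the Leibniz rule, the identity \eqref{nablaf} with $X=Y=T$, the reductions $g(\phi T,\phi T)=1-a$ and $\phi^{2}T=-T+\mathcal{V}$, and the Frenet relation $\nabla_{T}T=k_{1}V_{2}$ is exactly the intended derivation. No gaps.
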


\section{$f$-biharmonic $\protect\theta_{\protect\alpha}$-Slant Curves in $%
\mathcal{S}$-Space Forms\label{f-biharmonic}}

In this section, we consider proper $f$-biharmonic $\theta_{\alpha}$-slant
curves in $\mathcal{S}$-space forms. Let $\gamma$ be a unit-speed $%
\theta_{\alpha}$-slant curve in an $\mathcal{S}$-space form $\left(
M,\phi,\xi_{\alpha},\eta_{\alpha},g \right)$. In \cite{Guvenc-2020}, it is
shown that

\begin{equation*}
R\left( T,\nabla_{T}T \right) T = -k_{1} \left[ b^{2} + \frac{c + 3s}{4}
\left( 1 - a \right) \right] V_{2} - 3k_{1} \frac{c - s}{4} g\left( \phi
T,V_{2} \right) \phi T,
\end{equation*}

\begin{equation*}
\tau_{2}\left( \gamma \right) = \nabla_{T}\nabla_{T}\nabla_{T}T - R\left(
T,\nabla_{T}T \right) T.
\end{equation*}

Thus, we can calculate

\begin{eqnarray}
\tau_{3} &=& \nabla_{T}\nabla_{T}\nabla_{T}T - R\left( T,\nabla_{T}T \right)
T + 2\frac{f^{\prime}}{f} \nabla_{T}\nabla_{T}T + \frac{f^{\prime\prime}}{f}
\nabla_{T}T  \notag \\
&=& \left(-3k_{1}k_{1}^{\prime} - 2k_{1}^{2} \frac{f^{\prime}}{f} \right) T
\label{tau3} \\
&& + \left[ k_{1}^{\prime\prime} - k_{1}^{3} - k_{1}k_{2}^{2} + k_{1}\left[
b^{2} + \frac{c + 3s}{4}(1 - a) \right] + 2\frac{f^{\prime}}{f}%
k_{1}^{\prime} + \frac{f^{\prime\prime}}{f}k_{1} \right] V_{2}  \notag \\
&& + \left( 2k_{1}^{\prime}k_{2} + k_{1}k_{2}^{\prime} + 2\frac{f^{\prime}}{f%
} k_{1}k_{2} \right) V_{3} + k_{1}k_{2}k_{3} V_{4} + 3k_{1} \frac{c - s}{4}
g(\phi T, V_{2}) \phi T.  \notag
\end{eqnarray}

As a result, we can state the following theorem:

\begin{theorem}
\label{mainprop} $\gamma$ is a proper $f$-biharmonic $\theta_{\alpha}$-slant
curve in an $\mathcal{S}$-space form $\left(
M,\phi,\xi_{\alpha},\eta_{\alpha},g \right)$ if and only if 
\begin{equation}
3\frac{k_{1}^{\prime}}{k_{1}} + 2\frac{f^{\prime}}{f} = 0,  \label{1}
\end{equation}
\begin{equation}
k_{1}^{2} + k_{2}^{2} = \frac{k_{1}^{\prime\prime}}{k_{1}} + \frac{%
f^{\prime\prime}}{f} + 2\frac{f^{\prime}}{f} \frac{k_{1}^{\prime}}{k_{1}} +
b^{2} + \frac{c + 3s}{4}(1 - a) + 3\frac{c - s}{4} g(\phi T, V_{2})^{2},
\label{2}
\end{equation}
\begin{equation}
k_{2}^{\prime} + 2k_{2} \frac{k_{1}^{\prime}}{k_{1}} + 2k_{2} \frac{%
f^{\prime}}{f} + 3\frac{c - s}{4} g(\phi T, V_{2}) g(\phi T, V_{3}) = 0,
\label{3}
\end{equation}
\begin{equation}
k_{2}k_{3} + 3\frac{c - s}{4} g(\phi T, V_{2}) g(\phi T, V_{4}) = 0
\label{4}
\end{equation}
and $g(\tau_{3}, \phi T) = 0$.
\end{theorem}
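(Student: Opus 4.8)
The plan is to derive the stated conditions by decomposing the vector equation $\tau_3 = 0$ along a suitable frame. Since $\gamma$ is a proper $f$-biharmonic curve, by Ou's lemma it is $f$-biharmonic exactly when $\tau_3 = 0$, and formula (\ref{tau3}) gives $\tau_3$ explicitly as a combination of $T$, $V_2$, $V_3$, $V_4$ and the extra term $3k_1\frac{c-s}{4}g(\phi T, V_2)\phi T$. The key point is that $\phi T$ is not in general parallel to any single $V_i$: expanding it in the Frenet frame gives $\phi T = g(\phi T, V_1)V_1 + g(\phi T, V_2)V_2 + \dots$, where $g(\phi T, V_1) = g(\phi T, T) = 0$. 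So first I would substitute this expansion of $\phi T$ into (\ref{tau3}) and collect the coefficients of $T$, $V_2$, $V_3$, $V_4$, and any remaining $V_j$ ($j \geq 5$).

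Next I would set each of these coefficients to zero. The coefficient of $T$ gives $-3k_1 k_1' - 2k_1^2 \frac{f'}{f} = 0$; dividing by $k_1^2$ (legitimate since $\gamma$ is non-geodesic, so $k_1 > 0$) yields (\ref{1}). The coefficient of $V_2$ gives the bracketed expression in (\ref{tau3}) plus $3k_1\frac{c-s}{4}g(\phi T, V_2)^2$ equals zero; dividing by $k_1$ and rearranging $-k_1^2 - k_2^2 + \dots = 0$ produces (\ref{2}). The coefficient of $V_3$ yields $2k_1'k_2 + k_1 k_2' + 2\frac{f'}{f}k_1 k_2 + 3k_1\frac{c-s}{4}g(\phi T, V_2)g(\phi T, V_3) = 0$; dividing by $k_1$ gives (\ref{3}). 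The coefficient of $V_4$ gives $k_1 k_2 k_3 + 3k_1\frac{c-s}{4}g(\phi T, V_2)g(\phi T, V_4) = 0$, and dividing by $k_1$ gives (\ref{4}).

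The one subtlety is that $\phi T$ may have components along $V_j$ for $j \geq 5$ (when the osculating order $r$ is large), and these would each contribute a condition $3k_1\frac{c-s}{4}g(\phi T, V_2)g(\phi T, V_j) = 0$. Rather than list infinitely many such equations, the cleaner bookkeeping is to note that all of these, together with conditions (\ref{2})--(\ref{4}), are subsumed by the single scalar equation $g(\tau_3, \phi T) = 0$: indeed $g(\tau_3, \phi T)$ picks out precisely the $\phi T$-direction of the defining equation, while (\ref{1}) handles the $T$-component separately. So I would state the theorem's conditions as: (\ref{1}) (the $T$-component), (\ref{2})--(\ref{4}) (the $V_2$, $V_3$, $V_4$ components, which are the only Frenet directions that can carry a nonzero $\phi T$-contribution beyond $\phi T$ itself), plus $g(\tau_3, \phi T) = 0$ to capture the genuine $\phi T$ direction and any higher $V_j$ overlaps. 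The converse is immediate: if all listed equations hold, then every frame-component of $\tau_3$ vanishes, hence $\tau_3 = 0$.

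The main obstacle is purely organizational rather than computational: one must be careful that the five listed scalar conditions really are equivalent to the vanishing of the full vector $\tau_3$, given that $\phi T$ overlaps nontrivially with the Frenet frame. The resolution is the observation that $\{T, V_2, V_3, V_4\}$ together with the direction of $\phi T$ span everything relevant — $\tau_3$ as written in (\ref{tau3}) lies in $\mathrm{span}\{T, V_2, V_3, V_4, \phi T\}$ — so projecting onto $T$, onto $V_2, V_3, V_4$, and onto $\phi T$ (the latter via $g(\tau_3, \phi T) = 0$) is exactly a complete set of scalar conditions. Everything else is routine substitution and division by the nonvanishing $k_1$.
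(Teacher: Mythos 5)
Your proof is correct and follows essentially the same route as the paper: the paper's proof simply says to apply $T$, $V_{2}$, $V_{3}$, $V_{4}$ and $\phi T$ to the expression \eqref{tau3} and leaves the converse as "easily shown." Your extra care with the components of $\phi T$ outside $\mathrm{span}\{V_{2},V_{3},V_{4}\}$ — showing that $g(\tau_{3},\phi T)=0$ together with \eqref{2}--\eqref{4} kills the residual term because $\tau_{3}\in\mathrm{span}\{T,V_{2},V_{3},V_{4},\phi T\}$ — is exactly the detail the paper omits, and it is handled correctly.
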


\begin{proof}
Let $\gamma$ be a proper $f$-biharmonic $\theta_{\alpha}$-slant curve. Then $%
\tau_{3} = 0$. From equation \ref{tau3}, if we apply $T$, $V_{2}$, $V_{3}$, $%
V_{4}$, and $\phi T$, we obtain equations (\ref{1}), (\ref{2}), (\ref{3}), (%
\ref{4}), and $g(\tau_{3}, \phi T) = 0$. Conversely, if the given equations
are satisfied for a $\theta_{\alpha}$-slant curve, it can be easily shown
that $\tau_{3} = 0$. Therefore, $\gamma$ is a proper $f$-biharmonic curve.
\end{proof}

The following Lemma will be crucial for the reader to understand the results
of next cases:

\begin{lemma}
\label{ODEsol}Let $y=y(x)$ be a real valued function. Furthermore, let $%
c_{2}\geqslant 0$, $\lambda \geqslant 0$ be real constants, $\varepsilon \in
\left\{ -1,0,+1\right\} $ and $u=u(x)=2\lambda x+c_{4}$ for an arbitrary
constant $c_{4}$. Then the autonomous ODE 
\begin{equation*}
3\left( y^{\prime }\right) ^{2}-2yy^{\prime \prime }=4y^{2}\left[ \left(
1+c_{2}^{2}\right) y^{2}-\varepsilon \lambda ^{2}\right]
\end{equation*}%
has the general solution of the form%
\begin{equation}
y=\frac{\pm \sqrt{N}+M}{D},  \label{Lemma2}
\end{equation}%
where the functions $N$, $M$ and $D$ denotes

$(i)$ for $\varepsilon =+1$ and $\lambda >0:$ 
\begin{equation}
N=\lambda ^{2}\sec ^{2}u.\left[ -\left( 1+c_{2}^{2}+c_{3}^{2}\right) \sec
^{2}u+\left( 1+c_{2}^{2}-c_{3}^{2}\right) \right] ,  \label{Lemma2.i1}
\end{equation}%
\begin{equation}
M=\lambda c_{3}\sec ^{2}u,\text{ }D=\left( 1+c_{2}^{2}\right) \sec
^{2}u-\left( 1+c_{2}^{2}-c_{3}^{2}\right) ;  \label{Lemma2.i2}
\end{equation}

$(ii)$ for $\varepsilon =-1$ and $\lambda >0:$ 
\begin{equation}
N=\lambda ^{2}\sech^{2}u.\left[ \left( 1+c_{2}^{2}+c_{3}^{2}\right) \sech%
^{2}u-\left( 1+c_{2}^{2}+c_{3}^{2}\right) \right] ,  \label{Lemma2.ii1}
\end{equation}%
\begin{equation}
M=\lambda c_{3}\sech^{2}u,\text{ }D=\left( 1+c_{2}^{2}\right) \sech%
^{2}u-\left( 1+c_{2}^{2}+c_{3}^{2}\right) ;  \label{Lemma2.ii2}
\end{equation}

$(iii)$ for $\varepsilon =0$ or $\lambda =0:$%
\begin{equation}
N=0,\text{ }M=4c_{3},  \label{Lemma2.iii1}
\end{equation}%
\begin{equation}
D=c_{3}^{2}x^{2}+2c_{3}^{2}c_{4}x+c_{3}^{2}c_{4}^{2}+16c_{2}^{2}+16.
\label{Lemma2.iii2}
\end{equation}
\end{lemma}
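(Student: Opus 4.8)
The plan is to recognize the equation as autonomous, lower its order by producing a first integral, and then integrate the resulting first-order separable ODE in the three cases.

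First I would put $z=1/y$. A short computation turns the given equation into $2zz''-(z')^{2}=4(1+c_{2}^{2})-4\varepsilon \lambda ^{2}z^{2}$, which is convenient because it becomes linear once we regard $P=(z')^{2}$ as a function of $z$: since $dP/dz=2z''$, it reads $z\,dP/dz-P=4(1+c_{2}^{2})-4\varepsilon \lambda ^{2}z^{2}$, a first-order linear ODE with integrating factor $1/z$. (Equivalently, one may avoid the substitution altogether and note that on any solution $\frac{d}{dx}\!\left(\frac{(y')^{2}}{y^{3}}\right)=\frac{y'}{y^{4}}\bigl(2yy''-3(y')^{2}\bigr)=-4(1+c_{2}^{2})y'+4\varepsilon \lambda ^{2}\,y'/y^{2}$, then integrate directly.) Either route gives the first integral $(y')^{2}=y^{2}\bigl[-4(1+c_{2}^{2})y^{2}+c_{3}y-4\varepsilon \lambda ^{2}\bigr]$, with $c_{3}$ the constant of integration; in terms of $z$ this is $(z')^{2}=-4\varepsilon \lambda ^{2}z^{2}+c_{3}z-4(1+c_{2}^{2})$.

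Next I would integrate this separable equation, distinguishing the sign of $\varepsilon \lambda ^{2}$. If $\varepsilon =+1$ and $\lambda >0$, the radicand $-4\lambda ^{2}z^{2}+c_{3}z-4(1+c_{2}^{2})$ is a concave quadratic in $z$; completing the square and setting $z=z_{0}+\rho \cos u$ with $u=2\lambda x+c_{4}$ (the constant $c_{4}$ carrying the phase, while $z_{0}$ and $\rho$ are read off from the quadratic) solves it, and then $y=1/z$ is rewritten, using $\cos u\,\sec ^{2}u=\sec u$ and $\tan ^{2}u=\sec ^{2}u-1$, in the form $(\pm \sqrt{N}+M)/D$ of case $(i)$. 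If $\varepsilon =-1$ and $\lambda >0$, the radicand is a convex quadratic, and the same procedure with $z=z_{0}+\rho \cosh u$ (or $\rho \sinh u$, according to the sign of the discriminant) yields case $(ii)$ after the corresponding identities for $\sech u$. If $\varepsilon =0$ or $\lambda =0$, the radicand is linear in $z$; differentiating the first integral gives $z''=c_{3}/2$, so $z=1/y$ is a quadratic polynomial in $x$, and imposing the first-order constraint pins down its coefficients, giving case $(iii)$. Finally I would reverse the implications: $z=1/y$ is invertible where $y\neq 0$, and the only loss of information is the sign ambiguity when the first integral is solved for $y'$, which is precisely the $\pm$ in \eqref{Lemma2}; hence, conversely, every function of the stated shape solves the ODE. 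Along the way one records the parameter ranges making the quantities under the radicals nonnegative, and observes that, for $\varepsilon =+1$, the constant solution $y\equiv \lambda /\sqrt{1+c_{2}^{2}}$ arises as the degenerate case $\rho =0$.

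The sole conceptual point is the first step: seeing that $3(y')^{2}-2yy''$ is, up to the factor $-y'/y^{4}$, an exact derivative (equivalently, that $z=1/y$ linearizes the reduced equation), which delivers the first integral. Everything afterward is the elementary integration of a first-order ODE, but the real work lies in the bookkeeping of cases $(i)$ and $(ii)$: once $y=1/z$ is inverted, forcing the bounded trigonometric --- respectively unbounded hyperbolic --- expression into the exact rational-plus-radical normal form of the statement requires care with signs, with the branch of the square root, and with the passage to the $\sec ^{2}u$ / $\sech ^{2}u$ parametrization. A useful consistency check there is that the solution has period $\pi /\lambda$ and that its extreme values are the reciprocals of the two roots (in $z$) of $-4\lambda ^{2}z^{2}+c_{3}z-4(1+c_{2}^{2})$.
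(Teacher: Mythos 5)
The paper states this lemma without any proof, so there is no argument of the author's to compare yours against; I can only assess your proposal on its own terms. Your central step --- the reduction of order --- is correct and is surely the intended route. Setting $z=1/y$ does turn the equation into $2zz''-(z')^{2}=4(1+c_{2}^{2})-4\varepsilon \lambda ^{2}z^{2}$, and the first integral
\begin{equation*}
(z')^{2}=-4\varepsilon \lambda ^{2}z^{2}+c_{3}z-4(1+c_{2}^{2}),\qquad z=1/y,
\end{equation*}
is right. It reproduces case $(iii)$ exactly: there $z''=c_{3}/2$, so $z=\tfrac{c_{3}}{4}(x+c_{4})^{2}+4(1+c_{2}^{2})/c_{3}$ and $y=4c_{3}/\bigl[c_{3}^{2}(x+c_{4})^{2}+16(1+c_{2}^{2})\bigr]$, which is \eqref{Lemma2.iii1}--\eqref{Lemma2.iii2}, and it is consistent with the worked example of Section 5.

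The gap is that you defer precisely the step on which the stated formulas stand or fall. Completing your computation in case $(i)$ gives $z=z_{0}+\rho \cos u$ with $z_{0}=c_{3}/(8\lambda ^{2})$ and $\rho ^{2}=[c_{3}^{2}-64\lambda ^{2}(1+c_{2}^{2})]/(64\lambda ^{4})$, i.e. $y=8\lambda ^{2}/\bigl(c_{3}+\sqrt{c_{3}^{2}-64\lambda ^{2}(1+c_{2}^{2})}\cos u\bigr)$; one must then verify that this can be written, after renaming constants, as $(\pm \sqrt{N}+M)/D$ with $N$, $M$, $D$ as printed. It cannot: the bracket in \eqref{Lemma2.i1} satisfies $-\left( 1+c_{2}^{2}+c_{3}^{2}\right) \sec ^{2}u+\left( 1+c_{2}^{2}-c_{3}^{2}\right) \leq -2c_{3}^{2}\leq 0$, so $N\leq 0$ and $\sqrt{N}$ is real only degenerately; likewise \eqref{Lemma2.ii1} factors as $-\lambda ^{2}(1+c_{2}^{2}+c_{3}^{2})\sech ^{2}u\tanh ^{2}u\leq 0$. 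Inverting $y=1/z$ honestly yields, up to a common factor, $M=z_{0}\sec ^{2}u$, $N=\rho ^{2}\sec ^{2}u$, $D=z_{0}^{2}\sec ^{2}u-\rho ^{2}$, which is not the displayed normal form. So either the lemma carries sign errors that a completed proof must identify and correct, or a different identification of the constant $c_{3}$ is intended; in either case "requires care with signs" is not a proof of \eqref{Lemma2.i1}--\eqref{Lemma2.ii2}. You should carry out the inversion explicitly, exhibit the resulting $N$, $M$, $D$, reconcile them with (or amend) the stated ones, and record the existence condition $c_{3}^{2}\geq 64\lambda ^{2}(1+c_{2}^{2})$ in case $(i)$, which the statement omits.
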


Note that the arbitrary constants of the general solutions in Lemma \ref%
{ODEsol} are denoted by $c_{3}$ and $c_{4}$ to distinguish from $c_{1}$ and $%
c_{2},$ which will be used in some other equations later.

Now, we are all set to consider the equation $g(\tau _{3},\phi T)=0$ from
all points of view. Notice that if the coefficient of $\phi T$ vanishes,
then $g(\tau _{3},\phi T)=0$ is satisfied directly. The cases are:

\textbf{Case I. }$c = s.$

\textbf{Case II. }$c \neq s$ and $g(\phi T, V_{2}) = 0.$

When the coefficient of $\phi T$ does not vanish, we will investigate two
more cases:

\textbf{Case III. }$c \neq s$ and $\phi T \parallel V_{2}.$

\textbf{Case IV. }$c \neq s$ and $g(\phi T, V_{2}) \neq 0$ or $\pm \sqrt{1 -
a}.$

One might ask what happens if $\phi T$ itself vanishes as a vector field. In
this case, as shown in \cite{Guvenc-2020}, we have $T = \mathcal{V} =
\sum_{\alpha = 1}^{s} \cos \theta_{\alpha} \xi_{\alpha}$ and $k_{1} = 0$.
Thus, equation (\ref{1}) implies that $f$ is a constant. Consequently, $%
\gamma$ cannot be a proper $f$-biharmonic curve. Note that being proper $f$%
-biharmonic means $f$-biharmonic but not biharmonic. Likewise, being proper
biharmonic means biharmonic but not harmonic.

\textbf{Case I. }$c=s.$

In this case, equations (\ref{1}-\ref{4}) become%
\begin{equation}
3\frac{k_{1}^{\prime }}{k_{1}}+2\frac{f^{\prime }}{f}=0,  \label{1.1}
\end{equation}%
\begin{equation}
k_{1}^{2}+k_{2}^{2}=\frac{k_{1}^{\prime \prime }}{k_{1}}+\frac{f^{\prime
\prime }}{f}+2\frac{f^{\prime }}{f}\frac{k_{1}^{\prime }}{k_{1}}%
+b^{2}+s(1-a),  \label{1.2}
\end{equation}%
\begin{equation}
k_{2}^{\prime }+2k_{2}\frac{k_{1}^{\prime }}{k_{1}}+2k_{2}\frac{f^{\prime }}{%
f}=0,  \label{1.3}
\end{equation}%
\begin{equation}
k_{2}k_{3}=0,  \label{1.4}
\end{equation}%
By solving these equations, we can state the following theorem:

\begin{theorem}
\label{case1theorem} Under the assumption $c = s$, $\gamma$ is a proper $f$%
-biharmonic $\theta_{\alpha}$-slant curve in $\left(
M,\phi,\xi_{\alpha},\eta_{\alpha},g\right)$ if and only if:

$(i)$ \text{ }$\gamma $ is of osculating order $r=3$ with $%
f=c_{1}k_{1}^{-3/2}$, $\frac{k_{2}}{k_{1}}=c_{2}$, and $k_{1}=k_{1}\left(
t\right) $ is of the form 
\begin{equation*}
k_{1}\left( t\right) =\frac{\pm \sqrt{N}+M}{D},
\end{equation*}%
where the functions $N$, $M$ and $D$ are denoted by equations $(\ref%
{Lemma2.i1})$ and $(\ref{Lemma2.i2})$ and 
\begin{equation*}
\lambda _{1}=\sqrt{b^{2}+s(1-a)}>0,\text{ }u=u(t)=2\lambda _{1}t+c_{4},
\end{equation*}%
\ $c_{1}>0$, $c_{2}>0$, $c_{3}$ and $c_{4}$ are arbitrary constants, $t$ is
the arc-length parameter; or

$(ii)$ $\gamma $ is of osculating order $r=2$ and satisfies the above
equations and inequalities, except for the inequality $c_{2}>0$, where
instead $c_{2}=0$.
\end{theorem}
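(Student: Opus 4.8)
The plan is to solve the system (\ref{1.1})--(\ref{1.4}) directly, treating the osculating order as the main case split. First I would integrate (\ref{1.1}): since $3k_{1}^{\prime}/k_{1}+2f^{\prime}/f=0$, we get $(\ln k_{1}^{3}+\ln f^{2})^{\prime}=0$, hence $k_{1}^{3}f^{2}=\text{const}$, i.e.\ $f=c_{1}k_{1}^{-3/2}$ with $c_{1}>0$. This is the easy step and it pins down $f$ in terms of $k_{1}$. Next I would substitute this relation into (\ref{1.3}). Writing $f^{\prime}/f=-\tfrac{3}{2}k_{1}^{\prime}/k_{1}$, equation (\ref{1.3}) becomes $k_{2}^{\prime}+2k_{2}k_{1}^{\prime}/k_{1}-3k_{2}k_{1}^{\prime}/k_{1}=0$, that is $k_{2}^{\prime}=k_{2}k_{1}^{\prime}/k_{1}$, so $(k_{2}/k_{1})^{\prime}=0$ and $k_{2}=c_{2}k_{1}$ for a constant $c_{2}\geqslant 0$. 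Then (\ref{1.4}) forces either $k_{3}=0$ (so the osculating order is at most $3$) or $k_{2}=0$; in the latter case $c_{2}=0$ and $r=2$. This gives the dichotomy between $(i)$ ($r=3$, $c_{2}>0$) and $(ii)$ ($r=2$, $c_{2}=0$), with the $r=2$ subcase being literally the $c_{2}=0$ specialization of the $r=3$ formulas, exactly as the statement asserts.

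The remaining work is to turn (\ref{1.2}) into the claimed closed form for $k_{1}$. Using $f=c_{1}k_{1}^{-3/2}$ I would compute $f^{\prime\prime}/f$ and $2(f^{\prime}/f)(k_{1}^{\prime}/k_{1})$ in terms of $k_{1}$, $k_{1}^{\prime}$, $k_{1}^{\prime\prime}$: a short calculation gives
\[
\frac{f^{\prime\prime}}{f}+2\frac{f^{\prime}}{f}\frac{k_{1}^{\prime}}{k_{1}}
=-\frac{3}{2}\frac{k_{1}^{\prime\prime}}{k_{1}}+\frac{15}{4}\left(\frac{k_{1}^{\prime}}{k_{1}}\right)^{2}-3\left(\frac{k_{1}^{\prime}}{k_{1}}\right)^{2}
=-\frac{3}{2}\frac{k_{1}^{\prime\prime}}{k_{1}}+\frac{3}{4}\left(\frac{k_{1}^{\prime}}{k_{1}}\right)^{2}.
\]
Plugging this plus $k_{2}^{2}=c_{2}^{2}k_{1}^{2}$ into (\ref{1.2}) and collecting the $k_{1}^{\prime\prime}/k_{1}$ terms, the equation becomes
$(1+c_{2}^{2})k_{1}^{2}=-\tfrac12\tfrac{k_{1}^{\prime\prime}}{k_{1}}+\tfrac34\big(\tfrac{k_{1}^{\prime}}{k_{1}}\big)^{2}+b^{2}+s(1-a)$,
which after multiplying by $4k_{1}^{2}$ is precisely
\[
3\left(k_{1}^{\prime}\right)^{2}-2k_{1}k_{1}^{\prime\prime}=4k_{1}^{2}\!\left[(1+c_{2}^{2})k_{1}^{2}-\lambda_{1}^{2}\right],\qquad \lambda_{1}^{2}=b^{2}+s(1-a).
\]
This is the autonomous ODE of Lemma \ref{ODEsol} with $y=k_{1}$, $\lambda=\lambda_{1}$, and $\varepsilon=+1$; note $\lambda_{1}>0$ holds exactly because $\gamma$ is non-geodesic (so $a<1$ unless $b^{2}>0$, and in any case $b^{2}+s(1-a)>0$). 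Applying Lemma \ref{ODEsol}$(i)$ with $u=u(t)=2\lambda_{1}t+c_{4}$ yields $k_{1}(t)=(\pm\sqrt{N}+M)/D$ with $N,M,D$ given by (\ref{Lemma2.i1}), (\ref{Lemma2.i2}), completing the "only if" direction.

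For the converse I would simply observe that each step above is reversible: given $k_{1}$ of the stated form one has $3(k_{1}^{\prime})^{2}-2k_{1}k_{1}^{\prime\prime}=4k_{1}^{2}[(1+c_{2}^{2})k_{1}^{2}-\lambda_{1}^{2}]$ by Lemma \ref{ODEsol}, and defining $f=c_{1}k_{1}^{-3/2}$, $k_{2}=c_{2}k_{1}$ makes (\ref{1.1})--(\ref{1.4}) hold; then Theorem \ref{mainprop} (with $c=s$, so the $c-s$ terms drop and $g(\tau_{3},\phi T)=0$ is automatic) shows $\tau_{3}=0$, and $f$ is non-constant since $k_{1}$ is non-constant (which is forced whenever $\lambda_{1}>0$ and $c_{3}\neq 0$; the degenerate constant-$k_{1}$ solutions correspond to the excluded biharmonic case and must be checked not to arise, or noted separately). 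I expect the main obstacle to be purely bookkeeping: carefully verifying the algebra that reduces (\ref{1.2}) to the Lemma's ODE — in particular getting the coefficients $\tfrac32$ and $\tfrac34$ right in $f^{\prime\prime}/f+2(f^{\prime}/f)(k_{1}^{\prime}/k_{1})$ — and confirming that "proper" (non-constant $f$) is equivalent to the non-degeneracy of the constants appearing in the Lemma's solution.
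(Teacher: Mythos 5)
Your proposal is correct and follows essentially the same route as the paper: integrate (\ref{1.1}) to get $f=c_{1}k_{1}^{-3/2}$, use (\ref{1.3}) to get $k_{2}=c_{2}k_{1}$, use (\ref{1.4}) to restrict the osculating order to $r\in\{2,3\}$, and reduce (\ref{1.2}) to the autonomous ODE of Lemma \ref{ODEsol} with $\varepsilon=+1$ and $\lambda_{1}=\sqrt{b^{2}+s(1-a)}>0$. The only cosmetic difference is that the paper disposes of the degenerate case ``$k_{2}$ a nonzero constant'' explicitly (it forces $k_{1}$, hence $f$, constant, contradicting properness), whereas you fold that exclusion into your closing remark on non-constancy of $k_{1}$.
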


\begin{proof}
Let $k_{1}=k_{1}(t)$, where $t$ denotes the arc-length parameter. From
equation $(\ref{1.1})$, it is easy to see that $f=c_{1}k_{1}^{-3/2}$ for an
arbitrary constant $c_{1}>0$. Thus, we obtain%
\begin{equation}
\frac{f^{\prime }}{f}=\frac{-3}{2}\frac{k_{1}^{\prime }}{k_{1}},\quad \frac{%
f^{\prime \prime }}{f}=\frac{15}{4}\left( \frac{k_{1}^{\prime }}{k_{1}}%
\right) ^{2}-\frac{3}{2}\frac{k_{1}^{\prime \prime }}{k_{1}}.  \label{a1}
\end{equation}%
If $k_{2}=0$, then $\gamma $ is of osculating order $r=2$, and equations $(%
\ref{1.1})$ and $(\ref{1.2})$ must be satisfied. Therefore, the second
equation combined with $(\ref{a1})$ yields the ODE%
\begin{equation}
3(k_{1}^{\prime })^{2}-2k_{1}k_{1}^{\prime \prime }=4k_{1}^{2}\left[
k_{1}^{2}-\left( b^{2}+s(1-a)\right) \right] \text{.}  \label{ODE1}
\end{equation}%
This ODE can be solved using Lemma \ref{ODEsol}. Firstly, $1+c_{2}^{2}=1$
gives us $c_{2}=0.$ Since $s\geqslant 1,$ $1-a>0$ and $b^{2}\geqslant 0,$ we
have $b^{2}+s(1-a)>0.$ We can write%
\begin{eqnarray*}
b^{2}+s(1-a) &=&sgn\left( b^{2}+s(1-a)\right) .\left( \sqrt{b^{2}+s(1-a)}%
\right) ^{2} \\
&=&\varepsilon \lambda _{1}^{2},
\end{eqnarray*}%
where we denote $\lambda _{1}=\sqrt{b^{2}+s(1-a)}.$ Now that $\varepsilon
=+1 $ and $\lambda _{1}>0,$ using Lemma $\ref{ODEsol}$ $(i)$, we find that $%
k_{1}\left( t\right) $ is of the form $(\ref{Lemma2})$ where $N$, $M$ and $D$
are as in equations $(\ref{Lemma2.i1})$ and $(\ref{Lemma2.i2})$ (with $%
c_{2}=0$). Notice that $c_{2}=0$ is equivalent to $k_{2}=0=c_{2}k_{1}$ when $%
r=2$.

If $k_{2}=\text{constant}\neq 0$, we find that $f$ is a constant, indicating
that $\gamma $ is not proper $f$-biharmonic in this case. For $k_{2}\neq 
\text{constant}$, from equation (\ref{1.4}), we have $k_{3}=0$. Thus, $%
\gamma $ is of osculating order $r=3$. Using equation (\ref{a1}), equation (%
\ref{1.3}) gives us $\frac{k_{2}}{k_{1}}=c_{2}$, where $c_{2}>0$ is a
constant. Substituting these results into equation (\ref{1.2}), we obtain
the ODE%
\begin{equation*}
3(k_{1}^{\prime })^{2}-2k_{1}k_{1}^{\prime \prime
}=4k_{1}^{2}[(1+c_{2}^{2})k_{1}^{2}-\left( b^{2}+s(1-a)\right) ]
\end{equation*}%
which gives the general solution using Lemma \ref{ODEsol} and the proof is
completed.
\end{proof}

\textbf{Case II. }$c\neq s$ and $\phi T\perp V_{2}.$

In this case, $g(\phi T,V_{2})=0$. From Theorem \ref{mainprop}, we obtain 
\begin{equation*}
\begin{array}[t]{c}
3\frac{k_{1}^{\prime }}{k_{1}}+2\frac{f^{\prime }}{f}=0, \\ 
k_{1}^{2}+k_{2}^{2}=\frac{k_{1}^{\prime \prime }}{k_{1}}+\frac{f^{\prime
\prime }}{f}+2\frac{k_{1}^{\prime }}{k_{1}}\frac{f^{\prime }}{f}+b^{2}+\frac{%
c+3s}{4}(1-a), \\ 
k_{2}^{\prime }+2k_{2}\frac{f^{\prime }}{f}+2k_{2}\frac{k_{1}^{\prime }}{%
k_{1}}=0, \\ 
k_{2}k_{3}=0.%
\end{array}%
\end{equation*}%
Firstly, we need the following Lemma from \cite{Guvenc-2020}:

\begin{lemma}
\cite{Guvenc-2020} Let $\gamma $ be a $\theta _{\alpha }-$slant curve of
order $r=3$ in an $\mathcal{S}$-space form $\left( M,\phi ,\xi _{\alpha
},\eta _{\alpha },g\right) $ and $\phi T\perp V_{2}.$ Then, $\left\{
T,V_{2},V_{3},\phi T,\nabla _{T}\phi T,\xi _{1},...,\xi _{s}\right\} $ is
linearly independent. So $dimM\geqslant 5+s.$
\end{lemma}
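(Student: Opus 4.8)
The plan is to show linear independence by exploiting the structure equations of the $\mathcal{S}$-manifold together with the slant condition, ruling out any nontrivial linear dependence among the eight vectors $\{T,V_2,V_3,\phi T,\nabla_T\phi T,\xi_1,\dots,\xi_s\}$. First I would record the basic orthogonality facts available: $T,V_2,V_3$ are $g$-orthonormal by the Frenet equations; $\eta_\alpha(T)=\cos\theta_\alpha$ and $\eta_\alpha(V_2)=0$ for all $\alpha$; and in the present case $g(\phi T,V_2)=0$ by hypothesis. From \eqref{2.2} one computes $g(\phi T,\phi T)=1-a>0$ (since $\gamma$ is non-geodesic, $a<1$ by the earlier proposition), and $g(\phi T,T)=0$, $g(\phi T,\xi_\alpha)=-\eta_\alpha(\phi T)\cdot(\cdots)=0$ in fact $g(\phi T,\xi_\alpha)=g(T,\phi\xi_\alpha)=0$ using $\phi\xi_\alpha=0$ and the skew-symmetry of $\phi$. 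So $\phi T$ is orthogonal to $T$ and to every $\xi_\alpha$; it is orthogonal to $V_2$ by hypothesis; whether it is orthogonal to $V_3$ is not assumed, and that is the delicate point.

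Next I would bring in $\nabla_T\phi T$, whose explicit expression is \eqref{nablafT}:
\begin{equation*}
\nabla_T\phi T=(1-a)\sum_{\alpha=1}^{s}\xi_\alpha+b(-T+\mathcal{V})+k_1\phi V_2 .
\end{equation*}
The key observation is that $\phi V_2$ is a genuinely new direction: since $\eta_\alpha(V_2)=0$, \eqref{2.2} gives $g(\phi V_2,\phi V_2)=1$, and $g(\phi V_2,V_2)=0$, $g(\phi V_2,\xi_\alpha)=0$, $g(\phi V_2,\phi T)=g(V_2,T)-\sum\eta_\alpha(V_2)\eta_\alpha(T)=0$, and $g(\phi V_2,T)=-g(V_2,\phi T)=0$ by the case hypothesis. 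So $\phi V_2$ is a unit vector orthogonal to $T,V_2,\phi T$ and all $\xi_\alpha$. Consequently, modulo $\mathrm{span}\{T,\xi_1,\dots,\xi_s\}$, the vector $\nabla_T\phi T$ is a nonzero multiple of $\phi V_2$ (nonzero because $k_1>0$), and $\phi V_2$ lies outside $\mathrm{span}\{T,V_2,V_3,\phi T,\xi_1,\dots,\xi_s\}$ provided $g(\phi V_2,V_3)=0$ — which must itself be checked, but follows because $V_3$ is, up to the $-k_1 V_1$ term, proportional to $\nabla_T V_2=\nabla_T\nabla_T T$, and one can compute $g(\phi V_2,\nabla_T V_2)$ directly; alternatively one argues at the level of the final dependence relation.

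Concretely, I would suppose a relation $a_1 T+a_2 V_2+a_3 V_3+a_4\phi T+a_5\nabla_T\phi T+\sum_\alpha b_\alpha\xi_\alpha=0$ and extract coefficients by pairing against well-chosen vectors. Pairing with $\phi V_2$ kills everything except the $k_1\phi V_2$ part of $\nabla_T\phi T$ and the $a_3 V_3$ term; pairing with $\phi T$ isolates $a_4(1-a)$ (after substituting $\nabla_T\phi T$, whose $\phi T$-component vanishes since $\phi T\perp\xi_\alpha$, $\phi T\perp T$, $\phi T\perp\phi V_2$), giving $a_4=0$. Then pairing with $V_2$ and $V_3$ gives $a_2=a_3=0$ once the $\phi V_2$ contribution is controlled, pairing with $\xi_\alpha$ and with $T$ pins down the remaining $a_1,a_5,b_\alpha$ via the explicit coefficients $(1-a)$, $b$, $\cos\theta_\alpha$ in \eqref{nablafT}, and the non-vanishing of $1-a$ and $k_1$ forces all coefficients to zero. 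The main obstacle, as flagged above, is handling the interaction between $V_3$ and the $k_1\phi V_2$ term: one must verify that $\phi V_2$ has no component along $V_3$, equivalently that $\{V_3,\phi V_2\}$ are independent directions. This is where the $\mathcal{S}$-structure does real work — one computes $\nabla_T V_2$ from the Frenet equations and compares with $\phi V_2$ using \eqref{nablaf}, or observes that if $\phi V_2$ were a combination of the listed Frenet vectors one would contradict $g(\phi V_2,\phi T)=0$ together with the form of $\nabla_T\phi T$; the paper presumably dispatches this by the coefficient-extraction argument, so I would structure the proof to let the linear system itself rule it out rather than establishing $g(\phi V_2,V_3)=0$ as a separate fact. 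Finally, eight independent vectors tangent to $M$ force $\dim M\geq 8$, i.e.\ $2m+s\geq 5+s$ once one notes the count is $3+1+1+s=5+s$, giving the claimed bound $\dim M\geq 5+s$.
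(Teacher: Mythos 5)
First, a point of comparison: the paper itself does not prove this lemma at all --- it is imported verbatim from \cite{Guvenc-2020} --- so the only question is whether your sketch actually closes, and it does not. Your orthogonality bookkeeping is correct as far as it goes ($\phi T$ and $\phi V_{2}$ are mutually orthogonal, orthogonal to $V_{2}$, to every $\xi _{\alpha }$, and, by the case hypothesis, to $T$; reducing $\nabla _{T}\phi T$ modulo $\mathrm{span}\{T,\xi _{1},\dots ,\xi _{s}\}$ to the nonzero multiple $k_{1}\phi V_{2}$ is the right first move). But the difficulty you flag --- the interaction of $V_{3}$ with $\phi V_{2}$ and with the $\xi _{\alpha }$ --- is exactly where the argument stops, and the tentative claims you make there are false. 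Differentiating $\eta _{\alpha }(V_{2})=0$ along $\gamma $, using $\nabla _{T}V_{2}=-k_{1}T+k_{2}V_{3}$, $\nabla \xi _{\alpha }=-\phi $ and $\phi T\perp V_{2}$, gives $\eta _{\alpha }(V_{3})=k_{1}\cos \theta _{\alpha }/k_{2}$, which is not zero in general, so pairing your relation with $V_{3}$ or with $\xi _{\beta }$ does not decouple. Likewise, differentiating $g(\phi T,V_{2})=0$ gives $g(\phi T,V_{3})=0$ (you need this, together with $k_{2}>0$, even to isolate $a_{4}$), and differentiating once more, using $\nabla _{T}V_{3}=-k_{2}V_{2}$ and (\ref{nablafT}), yields $g(\phi V_{2},V_{3})=-b/k_{2}$. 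So $\phi V_{2}$ genuinely has a $V_{3}$-component unless $b=0$, and your fallback "let the linear system rule it out" is doing all the work.

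Carrying that linear system out honestly shows it does not rule itself out. Setting $S=\sum_{\alpha }b_{\alpha }\cos \theta _{\alpha }$, the pairings with $T$, $V_{2}$, $V_{3}$, $\phi T$, $\nabla _{T}\phi T$ and $\xi _{\beta }$ give $a_{2}=a_{4}=0$, $a_{1}=-S$, $a_{3}=-(k_{1}/k_{2})S$, and express $a_{5}$ and the $b_{\beta }$ in terms of $S$; eliminating everything, each remaining equation reduces to $S\cdot \left[ k_{2}^{2}(1-a)-b^{2}(1-a)-ak_{1}^{2}\right] =0$. Equivalently, the squared norm of the orthogonal projection of $V_{3}$ onto $\mathrm{span}\{T,\phi V_{2},\xi _{1},\dots ,\xi _{s}\}$ equals $b^{2}/k_{2}^{2}+ak_{1}^{2}/\left( k_{2}^{2}(1-a)\right) $, and the Gram matrix of your $5+s$ vectors is singular precisely when this equals $1$. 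Your argument therefore proves independence only under the extra condition $k_{2}^{2}(1-a)\neq b^{2}(1-a)+ak_{1}^{2}$; to finish, one must exclude that degenerate configuration (for instance by differentiating the putative dependence relation once more and deriving a contradiction with the Frenet equations and (\ref{nablaf})), or invoke whatever additional structure \cite{Guvenc-2020} uses. The final dimension count $3+1+1+s=5+s$ is fine (your passing "eight" is a slip you already correct), but it rests on the unproved independence.
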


Now we have the following theorem:

\begin{theorem}
\label{theoremcase2}Let $\gamma $ be a $\theta _{\alpha }-$slant curve in an 
$\mathcal{S}$-space form $(M^{2m+s},\phi ,\xi _{\alpha },\eta _{\alpha },g)$%
, $\alpha \in \left\{ 1,...,s\right\} ,$ $c\neq s$ and $\phi T\perp V_{2}$.
Then $\gamma $ is proper $f$-biharmonic if and only if

$(1)$ $\gamma $ is of osculating order $r=3$ with $f=c_{1}k_{1}^{-3/2},$ $%
\frac{k_{2}}{k_{1}}=c_{2},$ $m\geq 3,$%
\begin{equation*}
\left\{ T=V_{1},V_{2},V_{3},\phi T,\nabla _{T}\phi T,\xi _{1},...,\xi
_{s}\right\}
\end{equation*}%
is linearly independent and $k_{1}=k_{1}(t)$ is of the form%
\begin{equation*}
k_{1}=\frac{\pm \sqrt{N}+M}{D}
\end{equation*}%
where the functions $N$, $M$ and $D$ are denoted by

$\qquad (a)$ equations $(\ref{Lemma2.i1})$ and $(\ref{Lemma2.i2}),$ for $%
b^{2}+\left[ \left( c+3s\right) /4\right] (1-a)>0$;

\qquad $(b)$ equations $(\ref{Lemma2.ii1})$ and $(\ref{Lemma2.ii2}),$ for $%
b^{2}+\left[ \left( c+3s\right) /4\right] (1-a)<0$;

\qquad $(c)$ equations $(\ref{Lemma2.iii1})$ and $(\ref{Lemma2.iii2}),$ for $%
b^{2}+\left[ \left( c+3s\right) /4\right] (1-a)=0$;

with 
\begin{equation*}
\lambda _{2}=\sqrt{\left\vert b^{2}+\frac{\left( c+3s\right) }{4}%
(1-a)\right\vert },\text{ }u=u(t)=2\lambda _{2}t+c_{4},
\end{equation*}%
for arbitrary constants $c_{1}>0,$ $c_{2}>0,$\ $c_{3}$\ and $c_{4}$; or

$(2)$ $\gamma $ is of osculating order $r=2$ and satisfies the above
equations and inequalities, except for $m\geq 3,$ $c_{2}>0,$ 
\begin{equation*}
\text{ }\left\{ T=V_{1},V_{2},V_{3},\phi T,\nabla _{T}\phi T,\xi
_{1},...,\xi _{s}\right\} \text{ }
\end{equation*}%
is linearly independent, where instead $m\geq 2,c_{2}=0,$ 
\begin{equation*}
\text{ }\left\{ T=V_{1},V_{2},\phi T,\nabla _{T}\phi T,\xi _{1},...,\xi
_{s}\right\}
\end{equation*}
is linearly independent.
\end{theorem}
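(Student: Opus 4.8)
The plan is to reduce the theorem to the same ODE analysis used in Case~I, after first recording what the condition $g(\tau_3,\phi T)=0$ contributes when $c\neq s$ and $\phi T\perp V_2$. The starting point is the system of four scalar equations displayed just before the theorem (the specialisations of \eqref{1}--\eqref{4} with $g(\phi T,V_2)=0$), together with the extra requirement $g(\tau_3,\phi T)=0$. Since $g(\phi T,V_2)=0$, equations \eqref{3} and \eqref{4} lose their last terms, so \eqref{4} becomes simply $k_2k_3=0$; and from \eqref{tau3} the coefficient of $\phi T$ is $3k_1\frac{c-s}{4}g(\phi T,V_2)=0$, so $g(\tau_3,\phi T)=0$ holds automatically and imposes nothing further. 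Thus the system to solve is literally the Case~I system \eqref{1.1}--\eqref{1.4} with $s(1-a)$ replaced everywhere by $\frac{c+3s}{4}(1-a)$.

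First I would integrate \eqref{1}: as in the proof of Theorem~\ref{case1theorem}, $3\frac{k_1'}{k_1}+2\frac{f'}{f}=0$ integrates to $f=c_1 k_1^{-3/2}$ with $c_1>0$, and then \eqref{a1} gives $\frac{f'}{f}=-\frac32\frac{k_1'}{k_1}$ and $\frac{f''}{f}=\frac{15}{4}\bigl(\frac{k_1'}{k_1}\bigr)^2-\frac32\frac{k_1''}{k_1}$. Next I would split on $k_2$. If $k_2=0$ the curve has osculating order $r=2$; substituting \eqref{a1} into the second equation of the system yields
\begin{equation*}
3(k_1')^2-2k_1k_1''=4k_1^2\Bigl[k_1^2-\bigl(b^2+\tfrac{c+3s}{4}(1-a)\bigr)\Bigr],
\end{equation*}
which is exactly the ODE of Lemma~\ref{ODEsol} with $c_2=0$ and $\varepsilon\lambda_2^2=b^2+\frac{c+3s}{4}(1-a)$; here, unlike Case~I, the sign of this quantity is not forced, so all three subcases $\varepsilon=+1,-1,0$ of the Lemma occur, giving branches $(a)$, $(b)$, $(c)$. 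If $k_2$ is a nonzero constant, then \eqref{3} (with its last term gone) forces $\frac{k_1'}{k_1}+\frac{f'}{f}=0$, which together with $f=c_1k_1^{-3/2}$ forces $k_1$ constant and hence $f$ constant, contradicting properness; so $k_2$ is non-constant, \eqref{4} gives $k_3=0$, the curve has order $r=3$, \eqref{3} integrates to $\frac{k_2}{k_1}=c_2$ for a constant $c_2>0$, and substituting into the second equation gives the general Lemma~\ref{ODEsol} ODE with this $c_2$, again with all three sign subcases.

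It remains to translate the linear–independence hypothesis of the cited Lemma from \cite{Guvenc-2020} into the statement and to pin down the dimension bounds. For $r=3$ with $\phi T\perp V_2$, that Lemma gives that $\{T,V_2,V_3,\phi T,\nabla_T\phi T,\xi_1,\dots,\xi_s\}$ is linearly independent, forcing $2m+s\geq 5+s$, i.e. $m\geq 3$; for $r=2$ one drops $V_3$ and obtains $\{T,V_2,\phi T,\nabla_T\phi T,\xi_1,\dots,\xi_s\}$ linearly independent, forcing $m\geq 2$. Conversely, I would check that any $k_1(t)$ of the asserted form, with $f=c_1k_1^{-3/2}$ and $k_2=c_2k_1$ (resp.\ $k_2=0$), satisfies \eqref{1}--\eqref{4} and $g(\tau_3,\phi T)=0$, so by Theorem~\ref{mainprop} the curve is proper $f$-biharmonic; properness is guaranteed because $k_1$ is non-constant, hence $f$ is non-constant. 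The only genuine obstacle is bookkeeping: matching the signs of $b^2+\frac{c+3s}{4}(1-a)$ to the parameter $\varepsilon$ and the constant $\lambda_2$ in Lemma~\ref{ODEsol}, and verifying that the linear-independence claim of the auxiliary Lemma is preserved under the converse construction — neither is conceptually hard, but both must be stated carefully, and I expect the write-up to essentially mirror the proof of Theorem~\ref{case1theorem} with the single substitution $s(1-a)\leadsto\frac{c+3s}{4}(1-a)$ plus the addition of the two extra sign branches.
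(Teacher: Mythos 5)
Your proposal is correct and follows essentially the same route as the paper: reduce to the system with $g(\phi T,V_2)=0$, integrate \eqref{1} to get $f=c_1k_1^{-3/2}$, split on $k_2$ exactly as in Theorem \ref{case1theorem}, and apply Lemma \ref{ODEsol} to the resulting ODE with $\varepsilon\lambda_2^2=b^2+\frac{c+3s}{4}(1-a)$, now allowing all three sign branches. The paper's own proof is just a terser version of this ("similar to Theorem \ref{case1theorem}" plus the sign bookkeeping), so no substantive difference.
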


\begin{proof}
The proof is similar to the proof of Theorem \ref{case1theorem}. In this
case, we have the ODE%
\begin{equation*}
3(k_{1}^{\prime })^{2}-2k_{1}k_{1}^{\prime \prime
}=4k_{1}^{2}[(1+c_{2}^{2})k_{1}^{2}-\left[ b^{2}+\frac{\left( c+3s\right) }{4%
}(1-a)\right] .
\end{equation*}%
We use Lemma \ref{ODEsol} using the fact that%
\begin{eqnarray*}
b^{2}+\frac{c+3s}{4}(1-a) &=&sgn\left( b^{2}+\frac{c+3s}{4}(1-a)\right)
\left( \sqrt{\left\vert b^{2}+\frac{c+3s}{4}(1-a)\right\vert }\right) ^{2} \\
&=&\varepsilon \lambda _{2}^{2}.
\end{eqnarray*}%
For $b^{2}+\left[ \left( c+3s\right) /4\right] (1-a)>0,$ we have $%
\varepsilon =+1$ and $\lambda _{2}>0$. For $b^{2}+\left[ \left( c+3s\right)
/4\right] (1-a)<0,$ we find $\varepsilon =-1$ and $\lambda _{2}>0$. Finally $%
b^{2}+\left[ \left( c+3s\right) /4\right] (1-a)=0\ $gives $\varepsilon =0$
and $\lambda _{2}=0.$
\end{proof}

\textbf{Case III.} $c\neq s$, $\phi T\parallel V_{2}$.

As a result of the assumptions of this case, we have $\phi T=\epsilon \sqrt{%
1-a}V_{2}, g(\phi T,V_{2})=\epsilon \sqrt{1-a},$ $g(\phi T,V_{3})=0$ and $%
g(\phi T,V_{4})=0,$ where $\epsilon =sgn(g\left( \phi T,V_{2}\right) )=\pm
1. $ From equations $(\ref{3})$ and $\left( \ref{a1}\right) $, we can write%
\begin{equation}
k_{2}^{\prime }+2k_{2}\left( \frac{-3}{2}\frac{k_{1}^{\prime }}{k_{1}}%
\right) +2k_{2}\frac{k_{1}^{\prime }}{k_{1}}=0.  \label{3.3}
\end{equation}
Integrating $(\ref{3.3})$, we have 
\begin{equation*}
\frac{k_{2}}{k_{1}}=c_{2},
\end{equation*}%
for some constant $c_{2}>0.$ Likewise in \cite{Guvenc-2020}, for
non-constant $k_{1},$ one can show that if $\phi T=\epsilon \sqrt{1-a}V_{2},$
then%
\begin{equation*}
k_{2}=\sqrt{ad^{2}-as+b^{2}+2\epsilon bd+s},
\end{equation*}%
where $d=k_{1}/\sqrt{1-a}.$ So we get%
\begin{equation*}
\frac{\sqrt{ad^{2}-as+b^{2}+2\epsilon bd+s}}{k_{1}}=c_{2},
\end{equation*}%
which is equivalent to 
\begin{equation*}
\left( c_{2}^{2}+\frac{a}{a-1}\right) k_{1}^{2}+\frac{2\epsilon b}{a-1}%
k_{1}+\left( as-b^{2}-s\right) =0.
\end{equation*}%
If this equation is quadratic or linear in terms of $k_{1},$ then $k_{1}$
becomes a constant and $\gamma $ cannot be proper $f$-biharmonic. Let us
assume%
\begin{equation*}
c_{2}^{2}+\frac{a}{a-1}=0,
\end{equation*}%
\begin{equation*}
\frac{2\epsilon b}{a-1}=0,
\end{equation*}%
\begin{equation*}
as-b^{2}-s=0.
\end{equation*}%
The second equation above gives that $b=0.$ Then the last equation reduces
to $as-s=0,$ that is, $a=1$. But in this case $\gamma $ becomes a geodesic
as an integral curve of $\mathcal{V}$ and cannot be proper $f$-biharmonic.
Hence, we give the following result:

\begin{theorem}
\label{theoremcase3} There does not exist any proper $f$-biharmonic $\theta
_{\alpha }-$slant curve in an $\mathcal{S}$-space form $(M^{2m+s},\phi ,\xi
_{\alpha },\eta _{\alpha },g)$ with $c\neq s$ and $\phi T\parallel V_{2}$.
\end{theorem}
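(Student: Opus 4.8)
The plan is to run the argument entirely through the equations of Theorem~\ref{mainprop} together with the parallel computations from \cite{Guvenc-2020}, showing that the hypotheses force $k_1$ to be constant, which contradicts properness. First I would record what $\phi T\parallel V_2$ gives: since $g(\phi T,\phi T)=1-a$, one has $\phi T=\epsilon\sqrt{1-a}\,V_2$ with $\epsilon=\mathrm{sgn}\,g(\phi T,V_2)=\pm1$, hence $g(\phi T,V_2)=\epsilon\sqrt{1-a}$ and $g(\phi T,V_3)=g(\phi T,V_4)=0$. Consequently $(\ref{4})$ is automatic, and $(\ref{1})$ integrates (as in $(\ref{a1})$) to $f=c_1k_1^{-3/2}$, so $f$ is non-constant exactly when $k_1$ is non-constant; this is the clause to be violated.

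Next I would substitute $f'/f=-\tfrac32\,k_1'/k_1$ into equation $(\ref{3})$; since $g(\phi T,V_3)=0$ the curvature term drops and $(\ref{3})$ collapses to $k_2'+2k_2 k_1'/k_1-3k_2 k_1'/k_1=0$, i.e. $k_2'=k_2 k_1'/k_1$, which integrates to $k_2/k_1=c_2$ for a positive constant $c_2$. Independently, the computation in \cite{Guvenc-2020} for a $\theta_\alpha$-slant curve with $\phi T=\epsilon\sqrt{1-a}\,V_2$ supplies the closed form $k_2=\sqrt{ad^2-as+b^2+2\epsilon bd+s}$, where $d=k_1/\sqrt{1-a}$. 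Equating the two descriptions of $k_2$, squaring (both sides positive), and clearing the denominator $a-1$ (legitimate since $a\neq1$ for a non-geodesic slant curve) turns the relation into the polynomial identity
\begin{equation*}
\Bigl(c_2^2+\tfrac{a}{a-1}\Bigr)k_1^2+\tfrac{2\epsilon b}{a-1}\,k_1+\bigl(as-b^2-s\bigr)=0
\end{equation*}
holding along the curve; note that equation $(\ref{2})$ is not even needed for the conclusion.

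The decisive step is the case analysis on this relation. If the coefficient of $k_1^2$ or of $k_1$ is nonzero on some subinterval, then there $k_1$ satisfies a genuine quadratic (or linear) equation with constant coefficients and is therefore locally constant; by $(\ref{a1})$ this makes $f$ constant, contradicting properness. Hence all three coefficients must vanish identically. From $2\epsilon b/(a-1)=0$ and $a<1$ we get $b=0$; then $as-b^2-s=0$ forces $a=1$ because $s\geq1$; but $a=1$ means $\phi T=0$, so $T=\mathcal V$ and $\nabla_T T=\nabla_{\mathcal V}\mathcal V=0$, i.e. $\gamma$ is a geodesic — again incompatible with being proper $f$-biharmonic. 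Every branch ends in a contradiction, so no such curve exists.

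The main obstacle I anticipate is not the algebra but making the ``$k_1$ locally constant $\Rightarrow$ not proper'' reduction airtight: one should argue that if a coefficient fails to vanish on a dense set then $k_1$ is constant on an open subinterval, and then either invoke analyticity/uniqueness for the Frenet system or simply observe that the statement is local and a proper $f$-biharmonic curve cannot have $f$ constant on any subinterval. The only other points needing care — positivity for the squaring step and $a\neq1$ for dividing by $a-1$ — are already guaranteed by the standing hypotheses and by the non-geodesic assumption.
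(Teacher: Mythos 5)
Your proposal is correct and follows essentially the same route as the paper: integrate equation~(\ref{1}) to get $f=c_{1}k_{1}^{-3/2}$, use equation~(\ref{3}) with $g(\phi T,V_{3})=0$ to get $k_{2}/k_{1}=c_{2}$, combine this with the closed-form expression $k_{2}=\sqrt{ad^{2}-as+b^{2}+2\epsilon bd+s}$ from \cite{Guvenc-2020} to obtain the same polynomial relation in $k_{1}$, and then run the identical case analysis (non-vanishing coefficients force $k_{1}$ constant, vanishing coefficients force $b=0$ and then $a=1$, i.e.\ a geodesic). Your additional remark about making the ``locally constant $\Rightarrow$ not proper'' step airtight is a reasonable refinement of a point the paper passes over silently, but it does not change the argument.
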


\textbf{Case IV. }$c\neq s$ and $g(\phi T,V_{2})$ is not $0$ or $\pm \sqrt{%
1-a}.$

In this final case, let $(M^{2m+s},\phi ,\xi _{\alpha },\eta _{\alpha },g)$
be an $\mathcal{S}$-space form and $\gamma :I\rightarrow M$ a $\theta
_{\alpha }$-slant curve of osculating order $r.$ Note that $r=2$ corresponds
to $\phi T\in $span$\left\{ V_{2}\right\} ,$ which was investigated in Case
III$.$ So let $3\leq r\leq 2m+s.$ If $\gamma $ is $f$-biharmonic, then $\phi
T\in \text{span}\{V_{2},V_{3},V_{4}\}.$ Let $\beta (t)$ denote the angle
function between $\phi T$ and $V_{2},$ that is, $g(\phi T,V_{2})=\sqrt{1-a}%
\cos \beta (t).$ If we differentiate $g(\phi T,V_{2})$ along $\gamma $ and
use equations $(\ref{Frenetequations})$ and $(\ref{nablafT})$, we get

\begin{equation}
-\sqrt{1-a}\beta^{\prime}(t)\sin \beta(t) = \nabla_{T}g(\phi T,V_{2}) =
g(\nabla_{T}\phi T,V_{2}) + g(\phi T,\nabla_{T}V_{2})  \notag
\end{equation}
\begin{equation*}
= g\left( (1-a)\sum_{\alpha=1}^{s}\xi_{\alpha} + b\left( -T + \mathcal{V}%
\right) + k_{1}\phi V_{2}, V_{2}\right) + g(\phi T,-k_{1}T+k_{2}V_{3})
\end{equation*}
\begin{equation}
= k_{2}g(\phi T,V_{3}).  \label{4.1}
\end{equation}

If we write $\phi T = g(\phi T,V_{2})V_{2} + g(\phi T,V_{3})V_{3} + g(\phi
T,V_{4})V_{4},$ Theorem \ref{mainprop} gives us

\begin{equation}
3\frac{k_{1}^{\prime }}{k_{1}}+2\frac{f^{\prime }}{f}=0,  \label{e1}
\end{equation}%
\begin{equation}
k_{1}^{2}+k_{2}^{2}=\frac{k_{1}^{\prime \prime }}{k_{1}}+\frac{f^{\prime
\prime }}{f}+2\frac{f^{\prime }}{f}\frac{k_{1}^{\prime }}{k_{1}}+\left[
b^{2}+\frac{c+3s}{4}(1-a)+3\frac{c-s}{4}(1-a)\cos ^{2}\beta \right] ,
\label{e2}
\end{equation}%
\begin{equation}
k_{2}^{\prime }+2k_{2}\frac{k_{1}^{\prime }}{k_{1}}+2k_{2}\frac{f^{\prime }}{%
f}+3\frac{c-s}{4}\sqrt{1-a}\cos \beta g(\phi T,V_{3})=0,  \label{e3}
\end{equation}%
\begin{equation}
k_{2}k_{3}+3\frac{c-s}{4}\sqrt{1-a}\cos \beta g(\phi T,V_{4})=0.  \label{e4}
\end{equation}

If we substitute $(\ref{a1})$ into $(\ref{e2})$ and $(\ref{e3})$, we find

\begin{equation}
k_{1}^{2} + k_{2}^{2} = b^{2} + \frac{c+3s}{4}(1-a) + 3\frac{c-s}{4}%
(1-a)\cos^{2}\beta - \frac{k_{1}^{\prime \prime}}{2k_{1}} + \frac{3}{4}%
\left( \frac{k_{1}^{\prime}}{k_{1}} \right)^{2},  \label{e7}
\end{equation}
\begin{equation}
k_{2}^{\prime} - \frac{k_{1}^{\prime}}{k_{1}}k_{2} + 3\frac{c-s}{4}\sqrt{1-a}%
\cos \beta g(\phi T,V_{3}) = 0.  \label{e8}
\end{equation}

If we multiply (\ref{e8}) by $2k_{2}$ and use (\ref{4.1}), we obtain

\begin{equation}
2k_{2}k_{2}^{\prime} - 2\frac{k_{1}^{\prime}}{k_{1}}k_{2}^{2} + \frac{3(c-s)%
}{4}(1-a)(-2\beta^{\prime}\cos \beta \sin \beta) = 0.  \label{e9}
\end{equation}

Let us denote $\upsilon(t) = k_{2}^{2}(t)$, where $t$ is the arc-length
parameter. Then (\ref{e9}) becomes

\begin{equation}
\upsilon^{\prime} - 2\frac{k_{1}^{\prime}}{k_{1}}\upsilon = -\frac{3(c-s)}{4}%
(1-a)(-2\beta^{\prime}\cos \beta \sin \beta),  \label{e10}
\end{equation}

which is a linear ODE. If we solve (\ref{e10}), we get the following results:

$i)$ If $\beta$ is a constant, then

\begin{equation}
\frac{k_{2}}{k_{1}}=c_{2},  \label{e12}
\end{equation}%
where $c_{2}>0$ is an arbitrary constant. From $(\ref{4.1})$ and $(\ref{bb1}%
) $, we find $g(\phi T,V_{3})=0$. Since $\left\Vert \phi T\right\Vert =\sqrt{%
1-a}$ and $\phi T=\sqrt{1-a}\cos \beta V_{2}+g(\phi T,V_{4})V_{4}$, we
obtain $g(\phi T,V_{4})=\pm \sqrt{1-a}\sin \beta .$ Using $(\ref{e2})$ and $(%
\ref{e12})$, we have

\begin{equation*}
3(k_{1}^{\prime})^{2} - 2k_{1}k_{1}^{\prime \prime} = 4k_{1}^{2}\left[%
(1+c_{2}^{2})k_{1}^{2} - b^{2} - \frac{c+3s + 3(c-s)\cos^{2}\beta}{4}(1-a)%
\right].
\end{equation*}

$ii)$ If $\beta = \beta(t)$ is a non-constant function, then

\begin{equation}
k_{2}^{2} = -\frac{3(c-s)}{4}(1-a)\cos^{2}\beta + \mu(t)k_{1}^{2},
\label{bb1}
\end{equation}

where

\begin{equation}
\mu(t) = -\frac{3(c-s)}{2}(1-a)\int \frac{\cos^{2}\beta k_{1}^{\prime}}{%
k_{1}^{3}}dt.  \label{e11}
\end{equation}

If we substitute $(\ref{bb1})$ into $(\ref{e7})$, we find

\begin{equation*}
\left[ 1+\mu(t)\right] k_{1}^{2} = b^{2} + \frac{c+3s}{4}(1-a) + \frac{3(c-s)%
}{2}(1-a)\cos^{2}\beta - \frac{k_{1}^{\prime \prime}}{2k_{1}} + \frac{3}{4}%
\left( \frac{k_{1}^{\prime}}{k_{1}} \right)^{2}.
\end{equation*}

Hence, we can state the following final theorem:

\begin{theorem}
\label{theoremcase4}Let $\gamma :I\rightarrow M$ be a $\theta _{\alpha }-$%
slant curve of osculating order $r$ in an $\mathcal{S}$-space form $%
(M^{2m+s},\phi ,\xi _{\alpha },\eta _{\alpha },g)$, where $r\geq 3$, $c\neq
s $ , $g(\phi T,V_{2})=\sqrt{1-a}\cos \beta (t)$ $\ $is not $0\ $or $\pm 
\sqrt{1-a}.$ Then $\gamma $ is proper $f$-biharmonic if and only if $%
f=c_{1}k_{1}^{-3/2}$ and

$(i)$ if $\beta $ is a constant,%
\begin{equation*}
\frac{k_{2}}{k_{1}}=c_{2},
\end{equation*}%
\begin{equation*}
3(k_{1}^{\prime })^{2}-2k_{1}k_{1}^{\prime \prime }=4k_{1}^{2}\left\{
(1+c_{2}^{2})k_{1}^{2}-\left[ b^{2}+\frac{c+3s+3(c-s)\cos ^{2}\beta }{4}(1-a)%
\right] \right\}
\end{equation*}%
\begin{equation*}
k_{2}k_{3}=\pm \frac{3(c-s)\sin 2\beta }{8}\left( 1-a\right) ,
\end{equation*}

$(ii)$ if $\beta $ is a non-constant function,%
\begin{equation*}
k_{2}^{2}=-\frac{3(c-s)}{4}(1-a)\cos ^{2}\beta +\mu (t).k_{1}^{2},
\end{equation*}%
\begin{equation*}
3(k_{1}^{\prime })^{2}-2k_{1}k_{1}^{\prime \prime }=4k_{1}^{2}\left\{ (1+\mu
(t))k_{1}^{2}-\left[ b^{2}+\frac{c+3s+3(c-s)\cos ^{2}\beta }{4}(1-a)\right]
\right\} ,
\end{equation*}%
\begin{equation*}
k_{2}k_{3}=\pm \frac{3(c-s)\sin 2\beta \sin w}{8}\left( 1-a\right) ,
\end{equation*}%
where $c_{1}$ and $c_{2}$ are positive constants, 
\begin{equation}
\phi T=\sqrt{1-a}\left( \cos \beta V_{2}\pm \sin \beta \cos wV_{3}\pm \sin
\beta \sin wV_{4}\right) ,  \label{4.f}
\end{equation}%
$w=w(t)$ is the angle function between $V_{3}$ and the orthogonal projection
of $\phi T$ onto $span\left\{ V_{3},V_{4}\right\} .$ $w$ is related to $%
\beta $ by $\cos w=\mp \beta ^{\prime }/k_{2}$ and $\mu (t)$ is given by%
\begin{equation*}
\mu (t)=-\frac{3(c-s)}{2}\left( 1-a\right) \int \frac{\cos ^{2}\beta
k_{1}^{\prime }}{k_{1}^{3}}dt.
\end{equation*}
\end{theorem}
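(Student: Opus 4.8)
The plan is to follow the same scheme used in the proofs of Theorems \ref{case1theorem} and \ref{theoremcase2}, namely to translate the general system \eqref{1}--\eqref{4} of Theorem \ref{mainprop} into a tractable form once the standing hypotheses of Case IV are imposed, and then to read off the stated characterisation. The whole analysis leading up to the theorem has already been carried out in the text: equations \eqref{e1}--\eqref{e4} are exactly what Theorem \ref{mainprop} gives after writing $\phi T=g(\phi T,V_2)V_2+g(\phi T,V_3)V_3+g(\phi T,V_4)V_4$ (which is legitimate because, as noted, $f$-biharmonicity forces $\phi T\in\mathrm{span}\{V_2,V_3,V_4\}$), and equation \eqref{4.1} is the differentiated relation $\nabla_T g(\phi T,V_2)=k_2\,g(\phi T,V_3)$. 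So the first step is simply to record that \eqref{e1} integrates to $f=c_1k_1^{-3/2}$ with $c_1>0$ (exactly as in \eqref{a1}), substitute \eqref{a1} into \eqref{e2} and \eqref{e3} to obtain \eqref{e7} and \eqref{e8}, and then combine \eqref{e8} with \eqref{4.1} to reach the linear ODE \eqref{e10} for $\upsilon=k_2^2$.

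The second step is the case split on whether $\beta$ is constant. If $\beta$ is constant, the inhomogeneous term of \eqref{e10} vanishes, so $\upsilon'=2(k_1'/k_1)\upsilon$, which integrates to $k_2^2=c_2^2k_1^2$, i.e. $k_2/k_1=c_2$ for a constant $c_2>0$ (the case $c_2=0$ is excluded here since $r\ge3$). Then \eqref{4.1} forces $g(\phi T,V_3)=0$, and since $\|\phi T\|=\sqrt{1-a}$ the remaining component must satisfy $g(\phi T,V_4)=\pm\sqrt{1-a}\sin\beta$; feeding $k_2/k_1=c_2$ and $\cos^2\beta$ into \eqref{e2} yields the stated second-order ODE for $k_1$, while \eqref{e4} becomes $k_2k_3=\pm\frac{3(c-s)}{8}(1-a)\sin2\beta$. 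If $\beta$ is non-constant, solving the linear ODE \eqref{e10} by the integrating factor $k_1^{-2}$ gives \eqref{bb1} with $\mu(t)$ as in \eqref{e11}; substituting \eqref{bb1} into \eqref{e7} produces the second displayed ODE of part $(ii)$. For the decomposition \eqref{4.f} I would introduce the angle $w=w(t)$ between $V_3$ and the projection of $\phi T$ onto $\mathrm{span}\{V_3,V_4\}$, write $g(\phi T,V_3)=\pm\sqrt{1-a}\sin\beta\cos w$ and $g(\phi T,V_4)=\pm\sqrt{1-a}\sin\beta\sin w$, and then compare with \eqref{4.1} to get $k_2\sqrt{1-a}\sin\beta\cos w=\mp\sqrt{1-a}\beta'\sin\beta$, i.e. $\cos w=\mp\beta'/k_2$; equation \eqref{e4} then gives $k_2k_3=\pm\frac{3(c-s)}{8}(1-a)\sin2\beta\sin w$. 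The converse direction is routine: given $f=c_1k_1^{-3/2}$ together with the listed relations, one checks that \eqref{e1}--\eqref{e4} hold and hence, by Theorem \ref{mainprop}, that $\tau_3=0$ with $f$ non-constant, so $\gamma$ is proper $f$-biharmonic.

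The main obstacle is bookkeeping rather than conceptual: one must be careful that all the sign ambiguities ($\epsilon=\pm1$ from the orientation of $\phi T$ relative to $V_2$, the $\pm$ in $g(\phi T,V_4)$, and the $\mp$ in $\cos w=\mp\beta'/k_2$) are introduced consistently, and that in the non-constant-$\beta$ case the function $\mu(t)$ defined by the integral \eqref{e11} is the same one appearing in \eqref{bb1} — this requires matching the constant of integration in the linear ODE \eqref{e10} with the boundary behaviour built into \eqref{e11}. A secondary point to justify cleanly is that the hypothesis $g(\phi T,V_2)\ne 0,\pm\sqrt{1-a}$ genuinely forces $\beta$ to take values where $\sin\beta\ne0$ and $\cos\beta\ne0$, so that the various divisions performed above (in particular dividing \eqref{e9} through to reach \eqref{e10}, and solving $\cos w=\mp\beta'/k_2$) are legitimate; this is precisely what separates Case IV from Cases II and III and is what makes the $w$-decomposition \eqref{4.f} non-degenerate.
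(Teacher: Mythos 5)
Your proposal is correct and follows essentially the same route as the paper: the paper's own proof of this theorem is just the statement that it ``follows from the preceding calculations,'' and those calculations --- integrating \eqref{e1} to get $f=c_1k_1^{-3/2}$, passing to \eqref{e7}--\eqref{e10}, splitting on whether $\beta$ is constant, and deriving $\cos w=\mp\beta'/k_2$ by comparing \eqref{4.1} with \eqref{4.f} --- are exactly the ones you reproduce. One caveat worth flagging: substituting \eqref{bb1} into \eqref{e7} actually yields the coefficient $\tfrac{3(c-s)}{2}(1-a)\cos^{2}\beta$ (as in the display immediately preceding the theorem), not the $\tfrac{3(c-s)}{4}(1-a)\cos^{2}\beta$ sitting inside the bracket of the ODE stated in part $(ii)$, so your claim that the substitution ``produces the second displayed ODE of part $(ii)$'' silently inherits an inconsistency that is already present in the paper itself.
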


\begin{proof}
The proof follows directly from the preceding calculations. The equation $%
\cos w = \mp \frac{\beta^{\prime}}{k_{2}}$ is derived using equations %
\eqref{4.1} and \eqref{4.f}.
\end{proof}

In case $\beta $ is a constant, we can give the following direct corollary
of Theorem \ref{theoremcase4}:

\begin{corollary}
\label{corollary}Let $\gamma :I\rightarrow M$ be a $\theta _{\alpha }-$slant
curve of osculating order $r\geq 3$ in an $\mathcal{S}$-space form $%
(M^{2m+s},\phi ,\xi _{\alpha },\eta _{\alpha },g)$, where $c\neq s$ , $%
g(\phi T,V_{2})=\sqrt{1-a}\cos \beta \ $is a constant and $\beta \in \left(
0,2\pi \right) \backslash \left\{ \frac{\pi }{2},\pi ,\frac{3\pi }{2}%
\right\} $. Then $\gamma $ is proper $f$-biharmonic if and only if $%
f=c_{1}k_{1}^{-3/2}$ , $\frac{k_{2}}{k_{1}}=c_{2}$ and $k_{1}=k_{1}(t)$ is
of the form%
\begin{equation*}
k_{1}=\frac{\pm \sqrt{N}+M}{D}
\end{equation*}%
where the functions $N$, $M$ and $D$ are denoted by

$(a)$ equations $(\ref{Lemma2.i1})$ and $(\ref{Lemma2.i2}),$ for $%
b^{2}+\left\{ \left[ c+3s+3(c-s)\cos ^{2}\beta \right] /4\right\} (1-a)>0$;

$(b)$ equations $(\ref{Lemma2.ii1})$ and $(\ref{Lemma2.ii2}),$ for $%
b^{2}+\left\{ \left[ c+3s+3(c-s)\cos ^{2}\beta \right] /4\right\} (1-a)<0$;

$(c)$ equations $(\ref{Lemma2.iii1})$ and $(\ref{Lemma2.iii2}),$ for $%
b^{2}+\left\{ \left[ c+3s+3(c-s)\cos ^{2}\beta \right] /4\right\} (1-a)=0$;

with%
\begin{equation*}
\lambda _{4}=\sqrt{\left\vert b^{2}+\frac{c+3s+3(c-s)\cos ^{2}\beta }{4}%
(1-a)\right\vert },u=u(t)=2\lambda _{4}t+c_{4},
\end{equation*}%
for arbitrary constants $c_{1}>0,$ $c_{2}>0,$\ $c_{3}$\ and $c_{4}$; 
\begin{equation*}
k_{2}k_{3}=\pm \frac{3(c-s)\sin 2\beta }{8}\left( 1-a\right)
\end{equation*}%
and $\phi T=\sqrt{1-a}\left( \cos \beta V_{2}\pm \sin \beta V_{4}\right) $.
\end{corollary}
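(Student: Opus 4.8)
The plan is to read the corollary off Theorem \ref{theoremcase4}$(i)$, which already isolates the constant-$\beta$ situation inside Case IV; the only substantive work is (a) integrating the Riccati-type ODE for $k_1$ by invoking Lemma \ref{ODEsol}, and (b) collapsing \eqref{4.f} when $\beta'=0$. First I would check that the hypotheses put us in Case IV: since $1-a>0$ for a non-geodesic $\theta_\alpha$-slant curve, and $\beta\in(0,2\pi)\setminus\{\pi/2,\pi,3\pi/2\}$ forces $\cos\beta\neq 0$ and $\sin\beta\neq 0$, the quantity $g(\phi T,V_2)=\sqrt{1-a}\cos\beta$ is neither $0$ nor $\pm\sqrt{1-a}$; moreover $r\geq 3$ gives $k_2>0$ (if $k_2=0$ then $r=2$, i.e. Case III). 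Since $\beta$ is constant, $\beta'=0$ in \eqref{4.1} gives $k_2\,g(\phi T,V_3)=0$, hence $g(\phi T,V_3)=0$, and $\|\phi T\|^2=1-a$ then pins down $g(\phi T,V_4)=\pm\sqrt{1-a}\sin\beta\neq 0$, so in fact $r\geq 4$. Thus Theorem \ref{theoremcase4}$(i)$ applies verbatim, yielding $f=c_1k_1^{-3/2}$, $k_2/k_1=c_2$ with $c_2>0$, the value $k_2k_3=\pm\frac{3(c-s)\sin 2\beta}{8}(1-a)$, and the ODE
\begin{equation*}
3(k_1')^2-2k_1k_1''=4k_1^2\left\{(1+c_2^2)k_1^2-\left[b^2+\frac{c+3s+3(c-s)\cos^2\beta}{4}(1-a)\right]\right\}.
\end{equation*}

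Next I would bring this ODE into the normal form of Lemma \ref{ODEsol}. Setting $\Lambda:=b^2+\frac{c+3s+3(c-s)\cos^2\beta}{4}(1-a)$ and writing $\Lambda=sgn(\Lambda)\,\lambda_4^2=\varepsilon\lambda_4^2$ with $\lambda_4=\sqrt{|\Lambda|}\geq 0$ and $\varepsilon\in\{-1,0,+1\}$, the ODE becomes exactly $3(y')^2-2yy''=4y^2[(1+c_2^2)y^2-\varepsilon\lambda_4^2]$ for $y=k_1$, with $c_2\geq 0$ — precisely the hypothesis of Lemma \ref{ODEsol}. Taking $u=u(t)=2\lambda_4 t+c_4$, the lemma then gives $k_1=(\pm\sqrt N+M)/D$ with $N,M,D$ supplied by \eqref{Lemma2.i1}--\eqref{Lemma2.i2} when $\Lambda>0$ (so $\varepsilon=+1$, $\lambda_4>0$, part $(i)$), by \eqref{Lemma2.ii1}--\eqref{Lemma2.ii2} when $\Lambda<0$ (so $\varepsilon=-1$, $\lambda_4>0$, part $(ii)$), and by \eqref{Lemma2.iii1}--\eqref{Lemma2.iii2} when $\Lambda=0$ (so $\varepsilon=0$, part $(iii)$). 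For the converse, any $k_1$ of this shape solves the ODE by Lemma \ref{ODEsol}, and together with $f=c_1k_1^{-3/2}$, $k_2=c_2k_1$, the prescribed $k_2k_3$, and $\phi T=\sqrt{1-a}(\cos\beta V_2\pm\sin\beta V_4)$ it satisfies the full system of Theorem \ref{theoremcase4}$(i)$ (equivalently, equations \eqref{1}--\eqref{4} together with $g(\tau_3,\phi T)=0$ of Theorem \ref{mainprop}), so $\gamma$ is proper $f$-biharmonic.

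Finally I would simplify $\phi T$: since $\beta$ is constant, the relation $\cos w=\mp\beta'/k_2$ of Theorem \ref{theoremcase4} forces $\cos w=0$, whence $\sin w=\pm 1$, and \eqref{4.f} collapses to $\phi T=\sqrt{1-a}(\cos\beta V_2\pm\sin\beta V_4)$, consistent with the bookkeeping above. There is no serious obstacle in any of this; the proof is essentially an exercise in specialization. The only points demanding care are keeping the sign $\varepsilon$ consistent across the three subcases, verifying that $c_2\geq 0$ and $\lambda_4\geq 0$ so that Lemma \ref{ODEsol} genuinely applies, and observing — as noted above — that the forced nonvanishing of the $V_4$-component of $\phi T$ tacitly upgrades $r\geq 3$ to $r\geq 4$ in the non-vacuous case.
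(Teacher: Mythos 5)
Your proof is correct and follows essentially the same route as the paper's (which simply writes down the ODE and cites Lemma \ref{ODEsol} and Theorem \ref{theoremcase4}): specialize Case IV to constant $\beta$, match the resulting ODE to the normal form of Lemma \ref{ODEsol} via $\Lambda=\varepsilon\lambda_{4}^{2}$, and collapse $\phi T$ using $\cos w=0$. Your added observation that the forced nonvanishing of $g(\phi T,V_{4})=\pm\sqrt{1-a}\sin\beta$ upgrades $r\geq 3$ to $r\geq 4$ is a correct refinement that the paper leaves implicit.
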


\begin{proof}
In this case, we have the ODE%
\begin{equation*}
3(k_{1}^{\prime })^{2}-2k_{1}k_{1}^{\prime \prime
}=4k_{1}^{2}[(1+c_{2}^{2})k_{1}^{2}-\left[ b^{2}+\frac{c+3s+3(c-s)\cos
^{2}\beta }{4}(1-a)\right] .
\end{equation*}%
Using Lemma \ref{ODEsol} and Theorem \ref{theoremcase4}, the proof is clear.
\end{proof}

\section{\label{Applications}Construction of an Example in $\mathbb{R}%
^{4}(-6)$}

Firstly, let us recall the structures defined on a special $\mathcal{S}$%
-manifold. Consider $M=\mathbb{R}^{2m+s}$ with the coordinate functions $%
\{x_{1},...,x_{m},y_{1},...,y_{m},z_{1},...,z_{s}\}$ and the following
structures: 
\begin{equation*}
\xi _{\alpha }=2\frac{\partial }{\partial z_{\alpha }},\quad \alpha =1,...,s,
\end{equation*}%
\begin{equation*}
\eta ^{\alpha }=\frac{1}{2}\left( dz_{\alpha
}-\sum_{i=1}^{m}y_{i}dx_{i}\right) ,\quad \alpha =1,...,s,
\end{equation*}%
\begin{equation*}
\phi X=\sum_{i=1}^{m}Y_{i}\frac{\partial }{\partial x_{i}}%
-\sum_{i=1}^{m}X_{i}\frac{\partial }{\partial y_{i}}+\left(
\sum_{i=1}^{m}Y_{i}y_{i}\right) \left( \sum_{\alpha =1}^{s}\frac{\partial }{%
\partial z_{\alpha }}\right) ,
\end{equation*}%
\begin{equation*}
g=\sum_{\alpha =1}^{s}\eta ^{\alpha }\otimes \eta ^{\alpha }+\frac{1}{4}%
\sum_{i=1}^{m}\left( dx_{i}\otimes dx_{i}+dy_{i}\otimes dy_{i}\right) .
\end{equation*}%
Here, 
\begin{equation*}
X=\sum_{i=1}^{m}\left( X_{i}\frac{\partial }{\partial x_{i}}+Y_{i}\frac{%
\partial }{\partial y_{i}}\right) +\sum_{\alpha =1}^{s}\left( Z_{\alpha }%
\frac{\partial }{\partial z_{\alpha }}\right) \in \chi (M).
\end{equation*}%
Then, $\left( \mathbb{R}^{2m+s},\phi ,\xi _{\alpha },\eta ^{\alpha
},g\right) $ becomes an $\mathcal{S}$-space form with constant $\phi $%
-sectional curvature $-3s$. This manifold is denoted by $\mathbb{R}%
^{2n+s}(-3s)$ \cite{Hasegawa}. The following vector fields 
\begin{equation*}
X_{i}=2\frac{\partial }{\partial y_{i}},\quad X_{m+i}=\phi X_{i}=2\left( 
\frac{\partial }{\partial x_{i}}+y_{i}\sum_{\alpha =1}^{s}\frac{\partial }{%
\partial z_{\alpha }}\right) ,\quad \xi _{\alpha }=2\frac{\partial }{%
\partial z_{\alpha }}
\end{equation*}%
form a $g$-orthonormal basis of $\chi (M)$, and the Riemannian connection is
given by 
\begin{equation*}
\nabla _{X_{i}}X_{j}=\nabla _{X_{m+i}}X_{m+j}=0,\quad \nabla
_{X_{i}}X_{m+j}=\delta _{ij}\sum_{\alpha =1}^{s}\xi _{\alpha },\quad \nabla
_{X_{m+i}}X_{j}=-\delta _{ij}\sum_{\alpha =1}^{s}\xi _{\alpha },
\end{equation*}%
\begin{equation*}
\nabla _{X_{i}}\xi _{\alpha }=\nabla _{\xi _{\alpha }}X_{i}=-X_{m+i},\quad
\nabla _{X_{m+i}}\xi _{\alpha }=\nabla _{\xi _{\alpha }}X_{m+i}=X_{i},
\end{equation*}%
\cite{Hasegawa}. Let us choose $m=2$ and $s=2$. Now, let $\gamma
:I\rightarrow \mathbb{R}^{6}(-6),\gamma =\left( \gamma _{1},...,\gamma
_{6}\right) $ be a unit-speed $\theta _{\alpha }$-slant curve. We can
calculate 
\begin{equation*}
T=\frac{1}{2}\left[ \gamma _{3}^{\prime }X_{1}+\gamma _{4}^{\prime
}X_{2}+\gamma _{1}^{\prime }X_{3}+\gamma _{2}^{\prime }X_{4}+\xi _{2}\right]
,
\end{equation*}%
where we take $\theta _{1}=\frac{\pi }{2}$ and $\theta _{2}=\frac{\pi }{3}$.
Then, $g(T,T)=1$ gives us 
\begin{equation}
\left( \gamma _{1}^{\prime }\right) ^{2}+\left( \gamma _{2}^{\prime }\right)
^{2}+\left( \gamma _{3}^{\prime }\right) ^{2}+\left( \gamma _{4}^{\prime
}\right) ^{2}=3.  \label{unitspeed}
\end{equation}%
We also have 
\begin{equation}
\phi T=\frac{1}{2}\left[ -\gamma _{1}^{\prime }X_{1}-\gamma _{2}^{\prime
}X_{2}+\gamma _{3}^{\prime }X_{3}+\gamma _{4}^{\prime }X_{4}\right] .
\label{fiT}
\end{equation}%
After calculations, we obtain 
\begin{equation}
\nabla _{T}T=\frac{1}{2}\left[ 
\begin{array}{c}
\left( \gamma _{3}^{\prime \prime }+\gamma _{1}^{\prime }\right)
X_{1}+\left( \gamma _{4}^{\prime \prime }+\gamma _{2}^{\prime }\right) X_{2}
\\ 
+\left( \gamma _{1}^{\prime \prime }-\gamma _{3}^{\prime }\right)
X_{3}+\left( \gamma _{2}^{\prime \prime }-\gamma _{4}^{\prime }\right) X_{4}%
\end{array}%
\right] .  \label{nablaTT}
\end{equation}%
Let us select 
\begin{eqnarray}
k_{1} &=&\frac{1}{2+t^{2}},\quad  \label{22} \\
k_{2} &=&\frac{1}{2+t^{2}},  \notag \\
k_{3} &=&\frac{\sqrt{17}}{4}\left( 2+t^{2}\right) , \\
g(\phi T,V_{2}) &=&\frac{\sqrt{3}}{2}\cos \beta =\text{constant},  \notag \\
cos\beta &=&\pm \frac{\sqrt{2}}{6}\text{ }\left( \beta \approx 1.3329\text{
or }1.8087\right) ,  \notag \\
f &=&(2+t^{2})^{3/2},  \notag
\end{eqnarray}%
From (\ref{nablaTT}) and (\ref{22}), we get 
\begin{equation*}
\left[ 
\begin{array}{c}
\left( \gamma _{3}^{\prime \prime }+\gamma _{1}^{\prime }\right) ^{2}+\left(
\gamma _{4}^{\prime \prime }+\gamma _{2}^{\prime }\right) ^{2} \\ 
+\left( \gamma _{1}^{\prime \prime }-\gamma _{3}^{\prime }\right)
^{2}+\left( \gamma _{2}^{\prime \prime }-\gamma _{4}^{\prime }\right) ^{2}%
\end{array}%
\right] =\frac{4}{(2+t^{2})^{2}}.
\end{equation*}%
We also have $\eta ^{1}(T)=\cos \theta _{1}=0$ and $\eta ^{2}(T)=\cos \theta
_{2}=1/2$, which leads to 
\begin{equation*}
\gamma _{5}^{\prime }=\gamma _{1}^{\prime }\gamma _{3}+\gamma _{2}^{\prime
}\gamma _{4},
\end{equation*}%
\begin{equation*}
\gamma _{6}^{\prime }=1+\gamma _{1}^{\prime }\gamma _{3}+\gamma _{2}^{\prime
}\gamma _{4}.
\end{equation*}%
Using $\nabla _{T}T=k_{1}V_{2}$, we get 
\begin{eqnarray*}
g(\phi T,V_{2}) &=&\frac{1}{k_{1}}g(\phi T,\nabla _{T}T) \\
&=&\frac{2+t^{2}}{4}\left( -3+\gamma _{1}^{\prime \prime }\gamma
_{3}^{\prime }-\gamma _{1}^{\prime }\gamma _{3}^{\prime \prime }+\gamma
_{2}^{\prime \prime }\gamma _{4}^{\prime }-\gamma _{2}^{\prime }\gamma
_{4}^{\prime \prime }\right) .
\end{eqnarray*}%
Notice that for $c_{1}=1,$ $c_{2}=1,$ $c_{3}=4,$ $c_{4}=0,$ Corollary \ref%
{corollary} $(c)$ is satisfied. In fact, since $b=1/2$, $c=-6$, $s=2,$ $\sin
2\beta =\pm \sqrt{17}/9$ and $a=1/4$, we have 
\begin{equation*}
b^{2}+\frac{c+3s+3(c-s)\cos ^{2}\beta }{4}(1-a)=0,
\end{equation*}%
\begin{equation*}
k_{1}=\frac{1}{2+t^{2}}=\frac{4c_{3}}{%
c_{3}^{2}x^{2}+2c_{3}^{2}c_{4}x+c_{3}^{2}c_{4}^{2}+16c_{2}^{2}+16},
\end{equation*}%
\begin{equation*}
k_{2}=\frac{1}{2+t^{2}}=c_{2}k_{1},
\end{equation*}%
\begin{equation*}
k_{2}k_{3}=\pm \frac{\sqrt{17}}{4}=\pm \frac{3(c-s)\sin 2\beta }{8}\left(
1-a\right) .
\end{equation*}%
As a result, under these circumstances, $\gamma $ becomes a proper $f$%
-biharmonic $\theta _{\alpha }-$slant curve for $f=(2+t^{2})^{3/2}$ in $%
\mathbb{R}^{6}(-6).$

\section{Conclusions and Future Work}

In this study, the properties of $f$-biharmonic $\theta _{\alpha }-$slant
curves defined on $\mathcal{S}$-manifolds have been investigated. In the
future, it will be important to explore broader classes of these structures
and conduct studies on applications of $\theta _{\alpha }-$slant curves and
their implications in various systems. Recall that $\theta _{\alpha }-$slant
curves are defined as all the contact angles are constant seperately. As an
idea, two possible generalizations can be given as%
\begin{equation*}
a=\sum_{i=1}^{s}\cos ^{2}\theta _{\alpha }=constant
\end{equation*}%
or%
\begin{equation*}
b=\sum_{i=1}^{s}\cos \theta _{\alpha }=constant,
\end{equation*}%
where the contact angles do not need to be constant but their sum or squared
sum to be constant. Then $\theta _{\alpha }-$slant curves will be a subclass
and the results on these curves will be corollaries of those future studies.


\begin{thebibliography}{99}
\bibitem{CB-1994} Baikoussis, C., Blair, D. E.: \textit{On Legendre curves
in contact }$3$\textit{-manifolds}. Geom. Dedicata. \textbf{49}, 135-142
(1994).

\bibitem{Blair-1970} Blair, D. E.: \textit{Geometry of manifolds with
structural group }$U(n)\times O(s)$\textit{.} J. Differential Geometry. 
\textbf{4}, 155-167 (1970).

\bibitem{Blair} Blair, D. E.: Riemannian geometry of contact and symplectic
manifolds, Second edition. Progress in Mathematics, 203. Birkhauser Boston,
Inc., Boston, MA, (2010).

\bibitem{CFF} Cabrerizo, J. L., Fernandez, L. M., Fernandez M.: \textit{The
curvature of submanifolds of an }$\mathcal{S}$\textit{-space form}. Acta
Math. Hungar. \textbf{62,} 373-383 (1993).

\bibitem{Chen-96} Chen, B.Y.: \textit{A report on submanifolds of finite type%
}. Soochow J. Math. \textbf{22}, 117-337 (1996).

\bibitem{CIL} Cho, J. T., Inoguchi, J., Lee, J. E.: \textit{On slant curves
in Sasakian }$3$\textit{-manifolds}. Bull. Austral. Math. Soc. \textbf{74},
359-367 (2006).

\bibitem{EL} Eells, Jr. J., Lemaire, L.: Selected topics in harmonic maps.
Amer. Math. Soc. Providence, R.I. (1983).

\bibitem{ES} Eells, Jr. J., Sampson, J. H.: \textit{Harmonic mappings of
Riemannian manifolds}. Amer. J. Math. \textbf{86}, 109-160 (1964).

\bibitem{Fetcu-2009-2} Fetcu, D., Oniciuc, C.: \textit{Explicit formulas for
biharmonic submanifolds in Sasakian space forms}. Pacific J. Math. \textbf{%
240}, 85-107 (2009).

\bibitem{CKMS-2018} G\"{u}ven\c{c}, \c{S}., \"{O}zg\"{u}r, C.: \textit{On
slant curves in }$\mathcal{S}$\textit{-manifolds}. Commun. Korean Math. Soc. 
\textbf{33} (1), 293-303 (2018).

\bibitem{GO-2019} G\"{u}ven\c{c}, \c{S}. and \"{O}zg\"{u}r, C.: $C-$\textit{%
parallel and }$C-$\textit{proper Slant Curves of }$\mathcal{S}-$\textit{%
manifolds}. Filomat \textbf{33} (19), 6305-6313 (2019).

\bibitem{Guvenc-2019} G\"{u}ven\c{c}, \c{S}.: \textit{A Note on }$f$\textit{%
-biharmonic Legendre Curves in }$\mathcal{S}$\textit{-space forms}.
International Electronic Journal of Geometry, \textbf{12} (2), 260-267
(2019).

\bibitem{Guvenc-2020} G\"{u}ven\c{c}, \c{S}.: \textit{An Extended Family of
Slant Curves in }$\mathcal{S}$\textit{-manifolds}. Mathematical Sciences and
Applications E-Notes. \textbf{8} (1) , 69-77 (2020).

\bibitem{Hasegawa} Hasegawa, I., Okuyama, Y., Abe, T.: \textit{On }$p$%
\textit{-th Sasakian manifolds}. J. Hokkaido Univ. Ed. Sect. II A. \textbf{37%
} (1), 1-16 (1986).

\bibitem{Jiang-86} Jiang, G. Y.: $2$\textit{-harmonic maps and their first
and second variational formulas}. Chinese Ann. Math. Ser. A. \textbf{7},
389-402 (1986).

\bibitem{KDT} Kim, J. S., Dwivedi, M. K., Tripathi, M. M.: \textit{Ricci
curvature of integral submanifolds of an }$\mathcal{S}$\textit{-space form}.
Bull. Korean Math. Soc. \textbf{44}, 395-406 (2007).

\bibitem{Nak-1966} Nakagawa, H.: \textit{On framed }$f$\textit{-manifolds}.
Kodai Math. Sem. Rep. \textbf{18,} 293-306 (1966).

\bibitem{YeLin-2014} Ou, Y.L.: \textit{On }$f-$\textit{biharmonic maps and }$%
f-$\textit{biharmonic submanifolds}. Pacific Journal of Mathematics \textbf{%
271} (2), 461-477.

\bibitem{OG-2014} \"{O}zg\"{u}r, C., G\"{u}ven\c{c}, \c{S}.: \textit{On
biharmonic Legendre curves in }$\mathcal{S}$\textit{-space forms}. Turkish
J. Math. \textbf{38} (3), 454-461 (2014).

\bibitem{YK-1984} Yano, K., Kon, M.: Structures on Manifolds. Series in Pure
Mathematics, \textbf{3}. World Scientific Publishing Co. Singapore (1984).\ 
\end{thebibliography}
\end{document}